\documentclass[10pt]{ijnam}


\pagespan{1}{18}
\copyrightinfo{2004}{} 

\usepackage{amsmath}
\usepackage{amsfonts}
\usepackage{amssymb}
\usepackage{graphicx}
\usepackage{color}
\usepackage{bm}
\usepackage{verbatim}

\allowdisplaybreaks

\setcounter{MaxMatrixCols}{10}

\newtheorem{theorem}{Theorem}[section]

\newtheorem{lemma}[theorem]{Lemma}
\newtheorem{assumption}[theorem]{Assumption}

\theoremstyle{definition}
\newtheorem{definition}[theorem]{Definition}


\begin{document}

\title[Ensemble-POD for the Navier-Stokes equations]{A higher-order ensemble/proper orthogonal\\ 
	 decomposition 
	method for the \\
	nonstationary Navier-Stokes equations}

\author[M. Gunzburger]{Max Gunzburger}
\address{
  Department of Scientific Computing,  
  Florida State University, Tallahassee, FL 32306-4120
}
\email{mgunzburger@fsu.edu}

\author[N. Jiang \and M. Schneier]{Nan Jiang \and Michael Schneier}
\address{
  Department of Scientific Computing,  
  Florida State University, Tallahassee, FL 32306-4120. Current address: Department of Mathematics and Statistics, Missouri University of Science and Technology, Rolla, MO 65409-0020
}
\email{jiangn@mst.edu}

\address{
	Department of Scientific Computing,  
	Florida State University, Tallahassee, FL 32306-4120
}
\email{mschneier89@gmail.com}

\date{January 1, 2004 and, in revised form, March 22, 2004.}

\subjclass[2000]{35Q30, 65N30, 65M99}

\abstract{Partial differential equations (PDE) often involve parameters, such as viscosity or
	density. An analysis of the PDE may involve considering a large range of parameter values,
	as occurs in uncertainty quantification, control and optimization, inference, and several statistical techniques. The solution for even
	a single case may be quite expensive; whereas parallel computing may be applied, this reduces the total elapsed
	time but not the total computational effort. In the case of flows governed by the Navier-Stokes equations,
	a method has been devised for computing an ensemble of solutions. Recently, a reduced-order model derived from a 
	proper orthogonal decomposition (POD) approach was incorporated into a first-order accurate in time version of the ensemble algorithm. In this work,
	we expand on that work by incorporating the POD reduced order model into a second-order accurate ensemble algorithm. 
	Stability and convergence results for this method are updated to account for the POD/ROM approach. 
	Numerical experiments illustrate the accuracy and efficiency of the new approach.}

\keywords{Navier-Stokes equations, ensemble computation, proper orthogonal decomposition, finite element methods}

\maketitle

\section{Introduction}
In science and engineering, mathematical models are utilized to understand and predict the behavior of complex systems. Two common input data/parameters for these types of models include forcing terms and initial conditions. Often, for any number of reasons, there is a degree of uncertainty involved with the specification of such inputs. In order to obtain an accurate model we must incorporate such uncertainties into the governing equations and quantify their effects on the outputs of the simulation. 

In this uncertainty quantification setting, one needs to determine many realizations of the outputs of the simulation in order to determine accurate statistical information about those outputs. A similar need occurs in other settings such as inference, optimization, and control and the need to determine solution ensembles arising in many applications, e.g., weather forecasting and turbulence modeling. Thus, in this work, we are interested in computing ensembles of solutions for the Navier-Stokes equations (NSE) with uncertainty present in the initial conditions and body forces. Specifically, for $j=1,\ldots,J$, we have
\begin{equation}\label{eq:NSE}
\left\{\begin{aligned}
u_{t}^j+u^{j}\cdot\nabla u^{j}-\nu\triangle u^{j}+\nabla p^{j}  &
=f^{j}(x,t)&\quad\forall x\in\Omega\times(0,T]\\
\nabla\cdot u^{j}  &  =0&\quad\forall x\in\Omega\times(0,T]\\
u^{j}  &  =0&\quad\forall x\in\partial\Omega\times(0,T]\\
u^{j}(x,0)  &  =u^{j,0}(x)&\quad\forall x\in\Omega,
\end{aligned}\right.
\end{equation}
where $\Omega \subset \mathbb{R}^{d}$, $d= 2,3$, is an open regular domain.
In most settings, in order to {\color{red} guarantee a desired accuracy level} in the outputs, a fine spatial resolution is usually required, which renders each realization to be computationally intensive. Traditionally, the simulation for each ensemble member is treated as a separate problem; therefore the focus has been to cut down on the total number of realizations needed. However, in recent works \cite{J15, J17, JL14,AMJ16}, new algorithms were designed that allow for the realizations to be computed simultaneously at each time step. In those papers, the focus has been on creating algorithms which allow for the same linear system to be used for all right-hand sides. Hence, the need to solve a different linear system for each right-hand side is reduced to solving the a single system with many different right-hand sides. This is a well studied problem for which efficient block iterative methods already exist. A few examples of these include block CG \cite{FOP95}, block QMR \cite{FM97}, and block GMRES \cite{GS96}.

The next natural step to take to further improve upon the efficiency of these ensemble algorithms is the introduction of reduced-order modeling (ROM) techniques. Specifically, we are interested in the implementation of the proper orthogonal decomposition (POD) approach into the ensemble framework. POD works by generally extracting from highly accurate numerical simulations or experimental data the most energetic modes in a given system. 

One can then project the original problem onto these POD modes to obtain the Galerkin proper orthogonal decomposition (POD-G-ROM) approximation to the original problem. It has been shown for laminar flows \cite{BGH,IW14} that it is possible to obtain a good approximation with few POD modes, hence the corresponding POD-G-ROM only requires the solution of a small linear system. Recently, in \cite{GJS17}, a POD-G-ROM ensemble algorithm based on the first-order accurate in time ensemble algorithm first introduced in \cite{JL14} was developed. However, in applications that require long-term time integration, such as climate and ocean forecasting, higher-order methods are highly desirable. For this reason, in this paper we expand on the work of \cite{GJS17} by introducing the POD method into the second-order ensemble algorithm introduced in \cite{J15}.

\section{Notation and preliminaries}

We denote by $\|\cdot\|$ and $(\cdot,\cdot)$ the $L^{2}(\Omega)$ norm and inner product, respectively, and by $\|\cdot\|_{L^{p}}$ and $\|\cdot\|_{W_{p}^{k}}$ the $L^{p}(\Omega)$ and Sobolev
$W^{k}_{p}(\Omega)$ norms, 
respectively. $H^{k}(\Omega)=W_{2}^{k}(\Omega)$ with
norm $\|\cdot\|_{k}$. For a function $v(x,t)$ that is well defined on $\Omega \times [0,T]$, 
we define the norms
$$
|||v|||_{2,s} : = \Big(\int_{0}^{T}\|v(\cdot,t)\|_{s}^{2}dt\Big)^{\frac{1}{2}}
\qquad \text{and} \qquad 
\||v|||_{\infty,s} := \text{ess\,sup}_{[0,T]}\|v(\cdot,t)\|_{s} .
$$
The space $H^{-1}(\Omega)$ denotes the dual space of bounded linear functionals defined on $H^{1}_{0}(\Omega)=\{v\in H^{1}(\Omega)\,:\,v=0 \mbox{ on } \partial\Omega\}$; this space is equipped with the norm
$$
\|f\|_{-1}=\sup_{0\neq v\in X}\frac{(f,v)}{\| \nabla v\| } 
\quad\forall f\in H^{-1}(\Omega).
$$

The solutions spaces $X$ for the velocity and $Q$ for the pressure are respectively defined as
$$
\begin{aligned}
X : =& [H^{1}_{0}(\Omega)]^{d} = \{ v \in [L^{2}(\Omega)]^{d} \,:\, \nabla v \in [L^{2}(\Omega)]^{d \times d} \ \text{and} \  v = 0 \ \text{on} \ \partial \Omega \} \\
Q : =& L^{2}_{0}(\Omega) = \Big\{ q \in L^{2}(\Omega) \,:\, \int_{\Omega} q dx = 0 \Big\}.
\end{aligned}
$$
A weak formulation of (\ref{eq:NSE}) is given as follows: for $j=1, \ldots, J$, find $u^j:(0,T]\rightarrow X$ and $p^j:(0,T]\rightarrow Q$ such that, for almost all $t\in(0,T]$, satisfy 
\begin{equation}\label{wfwf}
\left\{\begin{aligned}
(u_{t}^j,v)+(u^{j}\cdot\nabla u^{j},v)+\nu(\nabla u^{j},\nabla v)-(p^{j}
,\nabla\cdot v)  &  =(f^{j},v)&\quad\forall v\in X\\
(\nabla\cdot u^{j},q)  &  =0&\quad\forall q\in Q\\
u^{j}(x,0)&=u^{j,0}(x).&
\end{aligned}\right.
\end{equation}
The subspace of $X$ consisting of weakly divergence-free functions is defined as
$$
V :=\{v\in X \,:\,(\nabla\cdot v,q)=0\,\,\forall q\in Q\} \subset X.
$$

We denote conforming velocity and pressure finite element spaces based on a regular triangulation of $\Omega$ having maximum triangle diameter $h$ by
$
X_{h}\subset X$ {and} $ Q_{h}\subset Q.
$
We assume that the pair of spaces $(X_h,Q_h)$ satisfy the discrete inf-sup (or $LBB_h$) condition required for stability of finite element approximations; we also assume that the finite element spaces satisfy the approximation properties
$$
\begin{aligned}
\inf_{v_h\in X_h}\| v- v_h \|&\leq C h^{s+1}&\forall v\in [H^{s+1}(\Omega)]^d\\
\inf_{v_h\in X_h}\| \nabla ( v- v_h )\|&\leq C h^s&\forall v\in [H^{s+1}(\Omega)]^d\\
\inf_{q_h\in Q_h}\|  q- q_h \|&\leq C h^s&\forall q\in H^{s}(\Omega),
\end{aligned}
$$
where $C$ is a positive constant that is independent of $h$. The Taylor-Hood element pairs ($P^s$-$P^{s-1}$), $s\geq 2$, are one common choice for which the $LBB_h$ stability condition and the approximation estimates hold \cite{GR79, Max89}.

In this paper, we solve the NSE \eqref{eq:NSE} (to generate snapshots) using a second-order time stepping scheme (e.g., Crank-Nicolson). Thus, we assume the finite element approximations satisfy the error estimates, which are used in the analysis in Section 5,
\begin{gather*}
\| u- u_h \|\leq C (h^{s+1}+\Delta t^{2} )\\
\| \nabla ( u- u_h ) \|\leq C (h^s+\Delta t^{2} ).
\end{gather*}

We define the trilinear form
$$
b(w,u,v) = (w\cdot\nabla u,v) 
\qquad\forall u,v,w\in [H^1(\Omega)]^d
$$
and the explicitly skew-symmetric trilinear form given by 
$$
b^{\ast}(w,u,v):=\frac{1}{2}(w\cdot\nabla u,v)-\frac{1}{2}(w\cdot\nabla v,u)
\qquad\forall u,v,w\in [H^1(\Omega)]^d
$$
which satisfies the bounds \cite{Layton08}
\begin{gather}
b^{\ast}(w,u,v)\leq C \|  \nabla w\|   \| \nabla u\| (\|  v \|  \| \nabla
v \| )^{1/2}\qquad\forall u, v, w \in X \label{In1}\\
b^{\ast}(w,u,v)\leq C (\| w \| \|  \nabla w\| )^{1/2}  \| \nabla u\|  \| \nabla
v \| \qquad\forall u, v, w \in X .\label{In2}
\end{gather}
We also have for $\forall u_h, v_h, w_h\in X_h$
\begin{align}\label{b-equality}
b^{\ast}(w_h,u_h, v_h)=\int_{\Omega}w_h \cdot \nabla u_h \cdot v_h dx+ \frac{1}{2}\int_{\Omega}(\nabla \cdot w_h)(u_h\cdot v_h) dx.
\end{align}
We also define the discretely divergence-free space $V_h$ as
$$
V_{h} :=\{v_{h}\in X_{h}\,:\,(\nabla\cdot v_{h},q_{h})=0\,\,\forall
q_{h}\in Q_{h}\}  \subset X.
$$
In most cases, and for the Taylor-Hood element pair in particular, $V_{h} \not\subset V$, i.e., discretely divergence-free functions are not divergence-free.

In this work, we focus on extending our  previously developed second-order accurate approximation for computing the solution of an ensemble of Navier-Stokes equations. These methods require a new definition for the ensemble mean as opposed to that used in \cite{GJS17}.

\begin{definition}\label{def21}
	Let $t^{n}=n\Delta t$, $n=0,1,2,\ldots,N$, where $N:=T/\Delta t$, denote a partition of the interval $[0,T]$. Let $|\cdot|$ denote the Euclidian length of a vector. For $j=1, \ldots, J$ and $n=0,1,2,\ldots,N$, let $u^{j,n}(x):=u^{j}(x,t^{n})$. Then, the \text{\bf ensemble mean}, \text{\bf fluctuation about the mean}, and \text{\bf magnitude of fluctuation}  are respectively defined, for $n=1,2,\ldots,N$, by 
	\begin{equation*}
	\left\{\begin{aligned}
	<u>^n : &=\frac{1}{J}\sum_{j=1}^{J}\left(2u^{j,n} - u^{j,n-1}\right)\\
	u'^{j,n} :&= 2u^{j,n} - u^{j,n-1} - <u>^{n}\\
	|u'^{n}| :&= \Big(\sum_{j=1}^{J}|u'^{j,n}|^{2}\Big)^{\frac{1}{2}}.
	\end{aligned}\right.
	\end{equation*}
\end{definition}

The full space-time discrete model given in \cite{JL15} that we refer to as \emph{EnB-full} is given as: for $j = 1, \ldots,J$, given $u^{j,0}_h \in X_h$ and $u^{j,1}_h\in X_h$, for $n=1,2,\ldots,N-1$ find $u^{j,n+1}_h\in X_h$ and $p_h^{j,n+1}\in Q_h$ satisfying
\begin{equation}\label{EnB}\emph{}
\begin{aligned}
&\Big(\frac{3u^{j,n+1}_h - 4u^{j,m}_h + u^{j,n-1}_h}{2\Delta t}, v_h \Big) + b^{\ast}(<u_h>^{n} , u^{j,n+1}_h ,v_h)+ \nu (\nabla u^{j,n+1}_h, \nabla v_h)\\
& + b^{\ast}(u'^{j,n}_h ,2u^{j,n}_h - u^{j,n-1}_h, v_h) - (p^{j,n+1}_h , \nabla \cdot v_h)  =( f^{j,n+1}, v_h) \quad \forall v_h\in X_h\\
&\qquad\quad (\nabla \cdot u_h^{j,n+1}, q_h )= 0 \qquad \forall q_h\in Q_h.
\end{aligned}
\end{equation}

\section{Proper orthogonal decomposition (POD) method}
\label{POD_sec}

We next briefly discuss the POD method and how we can apply it to the previously discussed ensemble algorithm. For a more in depth description, see \cite{GPS07, BGH, KV01, KV02}. In Section \ref{numex}, the results of numerical experiments are given to illustrate the efficacy of our new algorithm.

Given a positive integer $N_S$, let $0=t_0<t_1< \cdots < t_{N_S} = T$ denote a uniform partition of the time interval $[0,T]$. For $j=1,\ldots,J_S$, we select $J_S$ different initial conditions $u^{j,0}(x)$ and denote by $u_{h,S}^{j,m}(x)\in X_h$, $j=1,\ldots,J_S$, $m=1,\ldots,N_S$, the finite element approximation to \eqref{wfwf} evaluated at $t=t_m$, $m=1,\ldots,N_S$, corresponding to an approximate initial condition $u_h^{j,0}(x)\in X_h$ such as the $X_h$-interpolant of $u^{j,0}(x)$. We then define the space spanned by the $J_S(N_S+1)$ discrete snapshots as
\begin{equation*}
X_{h,S}:=\text{span} \{  u_{h,S}^{j,m}(x) \}_{j=1,m=0}^{J_S,N_S} \subset V_h \subset X_h.
\end{equation*}

Denoting by $\vec{u}_S^{j,m}$ the vector of coefficients corresponding to the finite element function $u_{h,S}^{j,m}(x)$, where $K=\dim X_h$, we define the $K\times J_S(N_S+1)$ {\em snapshot matrix} $\mathbb{A}$ as
$$
\mathbb{A} = \big(\vec{u}_S^{1,0},\vec{u}_S^{1,1}, \ldots , \vec{u}_S^{1,N_S}, \vec{u}_S^{2,0},\vec{u}_S^{2,1},  \ldots , \vec{u}_S^{2,N_S}, \ldots , \vec{u}_S^{J_S,0},\vec{u}_S^{J_S,1}, \ldots , \vec{u}_S^{J_S,N_S}\big),
$$
i.e., the columns of $\mathbb{A}$ are the finite element coefficient vectors corresponding to the discrete snapshots. The POD method then seeks a low dimensional basis 
$$
X_R :=\text{span}\{{\varphi}_i\}_{i=1}^R \subset X_{h,S} \subset V_h\subset X_h
$$
which can approximate the snapshot data. This basis can be determined by finding an orthonormal basis $\{\varphi_i\}_{i=1}^S$ for $X_{h,S}$ such that for all $R\in \{1,\ldots,S\}$, $\{\varphi_i\}_{i=1}^R$, solves the constrained minimization problem
\begin{equation}\label{Min}
\begin{aligned}
\min  \sum_{k=1}^{J_S} \sum_{l=0}^{N_S}\Big \|  u_{h,s}^{k,l}-\sum_{j=1}^R (u_{h,s}^{k,l}, \varphi_j)\varphi_j\Big \| ^2 \\
\text{subject to } (\varphi_i, \varphi_j)= \delta_{ij}\quad\mbox{for $i,j=1,\ldots,R$},
\end{aligned}
\end{equation}
where $\delta_{ij}$ denotes the Kronecker delta and the minimization is with respect to all orthonormal bases for $X_{h,S}$. We define the $J_S(N_S+1)\times J_S(N_S+1)$ correlation matrix $\mathbb{C} = \mathbb{A}^{T}\mathbb{M}\mathbb{A}$, where $\mathbb{M}$ denotes the Gram matrix corresponding to full finite element space. Then, the problem \eqref{Min} is equivalent to determine the $R$ dominant eigenpairs $\{\lambda_i,\vec{a}_i\}$ satifying
\begin{equation*}
\label{eigProb}
\mathbb{C}\vec{a}_{i} = \lambda_{i}\vec{a}_{i}, \quad |\vec{a}_{i}| = 1, \quad \vec{a}^{T}_{i}\vec{a}_{j} = 0 \ \text{if} \ i \neq j, \quad \text{and} \quad \lambda_{i} \geq \lambda_{i-1} > 0,
\end{equation*}
where $|\cdot|$ denotes the Euclidean norm of a vector. The finite element coefficient vectors corresponding to the POD basis functions are then given by
\begin{equation*}
\vec\varphi_i = \frac{1}{\sqrt{\lambda_i}}\mathbb{A}\vec{a}_{i}, \ \ \ i = 1, \ldots, R.
\end{equation*}

\subsection{POD-ensemble based reduced-order model}
\label{pod_gen}

We next show how the POD method can be used to construct a reduced-order model within the framework of the {EnB-full} algorithm. The formulation of the POD  approximation is identical to that of the full finite element approximation. Instead of seeking a solution in the finite element space $X_{h}$, we seek a solution in the POD space $X_{R}$ using the basis $\{ \varphi_{i}\}_{i=1}^{R}.$ Now, for $j=1,\ldots,J$, we define the POD approximation of the initial conditions as $u_{R}^{j,0}(x)=\sum_{i=1}^R (u^{j,0}, {\varphi}_i)\varphi_i(x)\in X_R$ and POD approximation of the of $u^{j,1}$ as $u_{R}^{j,1}(x)=\sum_{i=1}^R (u^{j,1}_h, {\varphi}_i)\varphi_i(x)\in X_R$, where $u_h^{j,1}$ denotes a finite element approximation of $u^{j,1}$ using a second-order (or higher-order) time stepping method such that $\Vert u^{j,1}-u_h^{j,1}\Vert_1=O(h^s + \Delta t^2)$ accurate or better. We then pose the following problem: {\em given $u_{R}^{j,0}(x), u_R^{j,1}\in X_R$, for $n=0,1,\ldots,N-1$ and for $j=1,\ldots,J$, find $u_{R}^{j,n+1}\in X_R$ satisfying}
\begin{equation}\label{EnB-POD-Weak}
\begin{aligned}
&\big(\frac{3u_{R}^{j,n+1}-4u_{R}^{j,n} + u_{R}^{j,n-1}}{2\Delta t}, \varphi\big)+b^{\ast}(<u_{R}
>^{n},u_{R}^{j,n+1},\varphi)\\
&\quad
+b^{\ast}(u_{R}'^{j, n},2u_{R}^{j,n} - u_{R}^{j,n-1},\varphi)+\nu(\nabla u_{R}^{j,n+1},\nabla
\varphi)=(f^{j,n+1},\varphi)\qquad\forall \varphi\in X_{R}.
\end{aligned}
\end{equation}
%
We refer to  \eqref{EnB-POD-Weak} as \emph{EnB-POD}. We note that because $X_{R} \subset V_{h}$, the POD basis is discretely divergence free by construction. Therefore, there is no pressure term present in \eqref{EnB-POD-Weak} and the only unknown in our system is the approximate velocity vector. 

\section{Stability of EnB-POD}

In this section, we present results pertaining to the conditional, nonlinear, longterm stability of solutions of \eqref{EnB-POD-Weak}.

We denote by $\|\hspace{-1pt}|  \cdot \|\hspace{-1pt}|_2 $ the spectral norm for symmetric matrices and let ${\mathbb M}_R$ denote the $R\times R$ POD mass matrix with entries $[{\mathbb M}_R]_{i,i'}= (\varphi_i, \varphi_{i'})$ and ${\mathbb S}_R$ denote the $R\times R$ matrix with entries $[{\mathbb S}_R]_{i,i'}=[{\mathbb M}_R]_{i,i'}+\nu(\nabla \varphi_i, \nabla \varphi_{i'})$, $i,i'=1,\ldots,R$. By construction the POD basis functions are orthonormal with respect to the $L^{2}(\Omega)$ inner product, hence the mass matrix is actually the identity matrix and $\|\hspace{-1pt}|  {\mathbb M}_R \|\hspace{-1pt}| _2 = \|\hspace{-1pt}|  {\mathbb M}_R^{-1} \|\hspace{-1pt}| _2 = 1$. Then, we have the inequality \cite{KV01} 
\begin{equation}\label{lm:inverse}
\|  \nabla \varphi \|  \leq \big(\|\hspace{-1pt}|  {\mathbb S}_R \|\hspace{-1pt}| _2 \|\hspace{-1pt}|  {\mathbb M}_R^{-1} \|\hspace{-1pt}| _2\big)^{1/2} \| \varphi \|  \leq \|\hspace{-1pt}|  {\mathbb S}_R \|\hspace{-1pt}| _2 ^{1/2} \| \varphi \|\qquad \forall \varphi \in X_R.
\end{equation}

We now show that \emph{EnB-POD} is stable under a time-step condition similar to that for \emph{EnB-full} except the norm of the POD stiffness matrix now plays a role in the stability condition.

\begin{theorem}
	\label{global_stability}
	For the method \eqref{EnB-POD-Weak}, suppose that 
	\begin{equation}
	\label{StabCondEnB}
	C\|\hspace{-1pt}|  {\mathbb S}_R \|\hspace{-1pt}| _2^{\frac{1}{2}}\frac{\Delta t}{\nu}\| \nabla u'^{j,n}_{R}\|^{2} \leq 1 \qquad \mbox{for $j=1,\ldots,J$}.
	\end{equation}
	Then, for $ n = 0, \ldots, N-1$
	\begin{equation}\label{Stability}
	\begin{aligned}
	&\frac{1}{4}\|u^{j,n}_{R}\|^{2} + \frac{1}{4}\|2u^{j,n}_{R} - u^{j,n-1}_{R}\|^{2} +\frac{\Delta t \nu}{4} \sum_{n'=1}^{n-1} \| \nabla u_{R}^{j,n'+1} \|^{2} \\
	&\leq \frac{\Delta t}{\nu}\sum_{n'=1}^{n-1}\|f^{j,n'+1}\|^{2}_{-1} + \frac{1}{4}\|u^{j,1}_{R}\|^{2} + \frac{1}{4}\|2u^{j,1}_{R} - u^{j,0}_{R}\|^{2}.
	\end{aligned}
	\end{equation}
\end{theorem}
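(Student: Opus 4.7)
The plan is to derive a one-step energy inequality by testing \eqref{EnB-POD-Weak} with $\varphi = u_R^{j,n+1}$, multiplying by $4\Delta t$, invoking the BDF2 algebraic (``G-'') identity on the discrete time derivative, and then summing from $n'=1$ to $n-1$ so that \eqref{Stability} emerges by telescoping.

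First, the BDF2 identity
$$2\bigl(3a-4b+c,a\bigr) = \|a\|^2 + \|2a-b\|^2 - \|b\|^2 - \|2b-c\|^2 + \|a-2b+c\|^2$$
converts the discrete time-derivative inner product into exactly the telescoping block appearing in \eqref{Stability} plus a non-negative numerical dissipation $\|u_R^{j,n+1}-2u_R^{j,n}+u_R^{j,n-1}\|^2$. The diffusion contributes $4\nu\Delta t\|\nabla u_R^{j,n+1}\|^2$; the first trilinear term $b^{\ast}(<u_R>^n, u_R^{j,n+1}, u_R^{j,n+1})$ vanishes by skew-symmetry of $b^{\ast}$ in its last two arguments; and the forcing is handled by dual-norm duality plus Young's inequality, contributing $\nu\Delta t\|\nabla u_R^{j,n+1}\|^2 + \tfrac{C\Delta t}{\nu}\|f^{j,n+1}\|_{-1}^2$.

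The crux is the fluctuation trilinear term $b^{\ast}(u_R'^{j,n}, 2u_R^{j,n} - u_R^{j,n-1}, u_R^{j,n+1})$. I would rewrite it via skew-symmetry as $-b^{\ast}(u_R'^{j,n}, u_R^{j,n+1}, 2u_R^{j,n} - u_R^{j,n-1})$, apply bound \eqref{In1}, and then invoke the POD inverse inequality \eqref{lm:inverse} on the half-power of $\|\nabla(2u_R^{j,n} - u_R^{j,n-1})\|$ to generate precisely the factor $\|\hspace{-1pt}|\mathbb{S}_R\|\hspace{-1pt}|_2^{1/4}$. Young's inequality with weight $2\nu\Delta t$ on the resulting product involving $\|\nabla u_R^{j,n+1}\|$ then yields $\nu\Delta t\|\nabla u_R^{j,n+1}\|^2$ plus a residual of the form $\tfrac{C\Delta t}{\nu}\|\hspace{-1pt}|\mathbb{S}_R\|\hspace{-1pt}|_2^{1/2}\|\nabla u_R'^{j,n}\|^2\,\|2u_R^{j,n}-u_R^{j,n-1}\|^2$; here the hypothesis \eqref{StabCondEnB} is exactly what is needed to bound the first two factors by a modest constant, leaving a controllable multiple of $\|2u_R^{j,n}-u_R^{j,n-1}\|^2$.

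Combining everything and discarding the nonnegative term $\|u_R^{j,n+1}-2u_R^{j,n}+u_R^{j,n-1}\|^2$, two of the four $\nu\Delta t\|\nabla u_R^{j,n+1}\|^2$ copies are consumed (one by forcing, one by nonlinearity), leaving at least $\nu\Delta t\|\nabla u_R^{j,n+1}\|^2$. Summing from $n'=1$ to $n-1$ telescopes $\|u_R^{j,n'+1}\|^2-\|u_R^{j,n'}\|^2$ and $\|2u_R^{j,n'+1}-u_R^{j,n'}\|^2-\|2u_R^{j,n'}-u_R^{j,n'-1}\|^2$ to their boundary values, and dividing by four gives \eqref{Stability}. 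The main obstacle is the fluctuation nonlinearity: it does not vanish under skew-symmetry and it cannot be controlled by a finite-element inverse estimate, since $X_R$ may be far smaller than $X_h$. Matching the specific half-power $\|\hspace{-1pt}|\mathbb{S}_R\|\hspace{-1pt}|_2^{1/2}$ in \eqref{StabCondEnB} forces the precise combination of skew-symmetry swap, \eqref{In1}, and \eqref{lm:inverse}; the absolute constant in \eqref{StabCondEnB} must be fixed small enough that the residual coefficient in front of $\|2u_R^{j,n}-u_R^{j,n-1}\|^2$ stays below one, preserving the clean telescoping form rather than producing an exponential Gronwall factor.
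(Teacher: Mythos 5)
Your overall skeleton (testing with $u_R^{j,n+1}$, the BDF2 algebraic identity, skew-symmetry killing the mean advection term, duality plus Young for the forcing, and the POD inverse inequality \eqref{lm:inverse} supplying the $\|\hspace{-1pt}| {\mathbb S}_R \|\hspace{-1pt}| _2^{1/2}$ in the time-step condition) matches the paper. The gap is in your treatment of the fluctuation term. You swap the last two arguments to get $-b^{\ast}(u_R'^{j,n},u_R^{j,n+1},2u_R^{j,n}-u_R^{j,n-1})$ and let the Young residual land on $\|2u_R^{j,n}-u_R^{j,n-1}\|^{2}$. After invoking \eqref{StabCondEnB}, that residual carries an $O(1)$ coefficient, not an $O(\Delta t)$ one: the hypothesis bounds the full product $\frac{\Delta t}{\nu}\|\hspace{-1pt}| {\mathbb S}_R \|\hspace{-1pt}| _2^{1/2}\|\nabla u_R'^{j,n}\|^{2}$ by a constant, so the factor of $\Delta t$ is consumed and you are left adding $c\,\|2u_R^{j,n}-u_R^{j,n-1}\|^{2}$ to the right-hand side at every step with $c$ independent of $\Delta t$. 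The telescoping only yields $\frac14\|2u_R^{j,n'+1}-u_R^{j,n'}\|^{2}-\frac14\|2u_R^{j,n'}-u_R^{j,n'-1}\|^{2}$ per step, so no choice of the absolute constant in \eqref{StabCondEnB} lets you hide this residual; you are forced into a discrete Gronwall argument whose growth factor $(1+4c)^{N}$ blows up as $\Delta t\to 0$. This defeats the long-term, Gronwall-free character of \eqref{Stability}, so the step as written fails.

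The paper uses skew-symmetry the other way: $b^{\ast}(u_R'^{j,n},2u_R^{j,n}-u_R^{j,n-1},u_R^{j,n+1}) = b^{\ast}(u_R'^{j,n},u_R^{j,n+1},u_R^{j,n+1}-2u_R^{j,n}+u_R^{j,n-1})$, so that the third argument is the second difference. Applying \eqref{In1} and then \eqref{lm:inverse} to the half-power of $\|\nabla(u_R^{j,n+1}-2u_R^{j,n}+u_R^{j,n-1})\|$ gives $C\Delta t\,\|\hspace{-1pt}| {\mathbb S}_R \|\hspace{-1pt}| _2^{1/4}\|\nabla u_R'^{j,n}\|\,\|\nabla u_R^{j,n+1}\|\,\|u_R^{j,n+1}-2u_R^{j,n}+u_R^{j,n-1}\|$; Young's inequality then produces $\frac14\|u_R^{j,n+1}-2u_R^{j,n}+u_R^{j,n-1}\|^{2}$, which is exactly cancelled by the numerical dissipation already sitting on the left from the BDF2 identity, plus $C\Delta t^{2}\|\hspace{-1pt}| {\mathbb S}_R \|\hspace{-1pt}| _2^{1/2}\|\nabla u_R'^{j,n}\|^{2}\|\nabla u_R^{j,n+1}\|^{2}$, which \eqref{StabCondEnB} bounds by $\frac{\nu\Delta t}{2}\|\nabla u_R^{j,n+1}\|^{2}$ and which is absorbed by the viscous term. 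Redo your estimate with the second difference as the third argument of $b^{\ast}$; the rest of your argument then goes through and gives \eqref{Stability} by clean telescoping.
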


\begin{proof} 
	The proof is very similar to that for \cite[Theorem 1]{J15}, the main difference being the use of a different basis function. Setting $\varphi = u^{j,n+1}_{R}$, using skew-symmetry of the trilinear form, and applying Young's inequality to the right-hand side, we have.
	\begin{equation}
	\label{Stab1}
	\begin{aligned}
	&\frac{1}{4}\left(\|u^{j,n+1}_{R}\|^{2} + \|2u^{j,n+1}_{R} - u^{j,n}_{R}\|^{2}\right) - \frac{1}{4}\left(\|u^{j,n}_{R}\|^{2} + \|2u^{j,n}_{R} - u^{j,n-1}_{R}\|^{2}\right)
	\\ &+\frac{1}{4}\|u^{j,n+1}_{R} - 2u^{j,n}_{R} + u^{j,n-1}_{R}\|^{2} + \Delta t b^{\ast}(u'^{j,n}_{R},2u^{j,n}_{R} - u^{j,n-1}_{R}, u^{j,n+1}_{R}) 
	\\ &+\nu \Delta t \| \nabla u_{R}^{j,n+1} \|^{2} \leq \frac{\nu \Delta t}{4}\|\nabla u^{j,n+1}_{R}\|^{2} + \frac{\Delta t}{\nu}\|f^{j,n+1}\|^{2}_{-1}.
	\end{aligned}
	\end{equation}
	We can then rewrite, using skew symmetry, the trilinear term  as
	\begin{equation*}
	\begin{aligned}
	b^{\ast}(u_{R}'^{j, n},2u_{R}^{j,n} - u_{R}^{j,n-1},u_{R}^{j,n+1}) = b^{\ast}(u'^{j,n}_{R},u^{j,n+1}_{R}, u^{j,n+1}_{R} - 2u^{j,n}_{R} + u^{j,n-1}_{R}).
	\end{aligned}
	\end{equation*}
	Now bounding the trilinear form using \eqref{lm:inverse}
	\begin{equation}
	\label{Stab2}
	\begin{aligned}
	&\Delta t b^{\ast}(u'^{j,n}_{R},u^{j,n+1}_{R}, u^{j,n+1}_{R} - 2u^{j,n}_{R} + u^{j,n-1}_{R}) \\
	&\leq C \Delta t \|\nabla u'^{j,n}_{R}\||| \nabla u^{j,n+1}_{R}\| \| u^{j,n+1}_{R} - 2u^{j,n}_{R} + u^{j,n-1}_{R} \|^{\frac{1}{2}} \| \nabla (u^{j,n+1}_{R} - 2u^{j,n}_{R} + u^{j,n-1}_{R}) \|^{\frac{1}{2}}\\
	&\leq C\Delta t \|\nabla u'^{j,n}_{R}\|||\nabla u^{j,n+1}_{R}\| \| u^{j,n+1}_{R} - 2u^{j,n}_{R} + u^{j,n-1}_{R} \| \|\hspace{-1pt}|  {\mathbb S}_R \|\hspace{-1pt}| _2^{\frac{1}{4}}.
	\end{aligned} 
	\end{equation} 
	
	Applying Young's inequality gives
	\begin{equation*}
	\begin{aligned}
	C&\Delta t \|\nabla u'^{j,n}_{R}\||| \nabla u^{j,n+1}_{R}\| \| u^{j,n+1}_{R} - 2u^{j,n}_{R} + u^{j,n-1}_{R} \|||\hspace{-1pt}|  {\mathbb S}_R \|\hspace{-1pt}| _2^{\frac{1}{4}} \\
	& \leq C\Delta t^{2} \|\nabla u'^{j,n}_{R}\|^{2}\| \nabla u^{j,n+1}_{R}\|^{2}\|\hspace{-1pt}|  {\mathbb S}_R \|\hspace{-1pt}| _2^{\frac{1}{2}} + \frac{1}{4}\| u^{j,n+1}_{R} - 2u^{j,n}_{R} + u^{j,n-1}_{R} \|^{2}. 
	\end{aligned}
	\end{equation*}
	Using this bound we then combine like terms. \eqref{Stab1} becomes
	\begin{equation}
	\label{Stab3}
	\begin{aligned}
	&\frac{1}{4}\left(\|u^{j,n+1}_{R}\|^{2} + \|2u^{j,n+1}_{R} - u^{j,n}_{R}\|^{2}\right) - \frac{1}{4}\left(\|u^{j,n}_{R}\|^{2} + \|2u^{j,n}_{R} - u^{j,n-1}_{R}\|^{2}\right) \\ 
	&+ \frac{\nu \Delta t}{4} \| \nabla u_{R}^{j,n+1} \|^{2} + \frac{\nu \Delta t}{2}\left(1 - C\|\hspace{-1pt}|  {\mathbb S}_R \|\hspace{-1pt}| _2^{\frac{1}{2}}\frac{\Delta t}{\nu}\| \nabla u'^{j,n}_{R}\|^{2}\right)\|\nabla u^{j,n+1}_{R}\|\\ &\qquad\leq \frac{\Delta t}{\nu}\|f^{j,n+1}\|^{2}_{-1}.
	\end{aligned}
	\end{equation}
	Now under condition \eqref{StabCondEnB}, it follows that
	\begin{equation*}
	\frac{\nu \Delta t}{2}\left(1 - C\|\hspace{-1pt}|  {\mathbb S}_R \|\hspace{-1pt}| _2^{\frac{1}{2}}\frac{\Delta t}{\nu}\| \nabla u'^{j,n}_{R}\|^{2}\right)\|\nabla u^{j,n+1}_{R}\| \geq 0 
	\end{equation*} 
	\eqref{Stab3} and then simplifies to
	\begin{equation*}
	\begin{aligned}
	&\frac{1}{4}\left(\|u^{j,n+1}_{R}\|^{2} + \|2u^{j,n+1}_{R} - u^{j,n}_{R}\|^{2}\right) - \frac{1}{4}\left(\|u^{j,n}_{R}\|^{2} + \|2u^{j,n}_{R} - u^{j,n-1}_{R}\|^{2}\right)\\ 
	&\qquad+ \frac{\nu \Delta t}{4} \| \nabla u_{R}^{j,n+1} \|^{2} \leq \frac{\Delta t}{\nu}\|f^{j,n+1}\|^{2}_{-1}.
	\end{aligned}
	\end{equation*}
	Summing up the above inequality results in \eqref{Stability}.
\end{proof}

\subsection{Improving the time-step condition}

While Theorem \ref{global_stability} is true universally in two and three dimensions, there are also certain cases for which we can improve upon our estimate. We are specifically interested in the three-dimensional case for which we make use of the Sobolev embedding inequality: for $v \in X$ and $d = 3$, the $L^{6}$ norm satisfies \cite{Ladyzhenskaya69}
\begin{equation}
\label{sobolev_ineq}
\| v \|_{L^{6}} \leq C_{se} \| \nabla v \|  \quad \forall v \in X,
\end{equation}
where the constant $C_{se} = \frac{2}{\sqrt3}$. In the following stability estimate, we  see that instead of needing to control the gradient of fluctuations about the mean, we now are only concerned with $L^{3}$ fluctuations about the mean.

\begin{theorem}
	\label{3d_stability}
	Consider the method \eqref{EnB-POD-Weak} when $d = 3$. Suppose that 
	\begin{equation}
	\label{StabCond3D}
	C_{se}^{2}\|\mathbb{S}_{R}\|\frac{\Delta t}{\nu}\| u'^{j,n}_{R}\|_{L^{3}}^{2} \leq 1 \qquad \mbox{for $j=1,\ldots,J$}.
	\end{equation}
	Then, for $ n = 0, \ldots, N-1$
	\begin{equation}\label{Stability3D}
	\begin{aligned}
	&\frac{1}{4}\|u^{j,n}_{R}\|^{2} + \frac{1}{4}\|2u^{j,n}_{R} - u^{j,n-1}_{R}\|^{2} +\frac{\Delta t \nu}{4} \sum_{n'=1}^{n-1} \| \nabla u_{R}^{j,n'+1} \|^{2} \\
	&\leq \frac{\Delta t}{\nu}\sum_{n'=1}^{n-1}\|f^{j,n'+1}\|^{2}_{-1} + \frac{1}{4}\|u^{j,1}_{R}\|^{2} + \frac{1}{4}\|2u^{j,1}_{R} - u^{j,0}_{R}\|^{2}.
	\end{aligned}
	\end{equation}
\end{theorem}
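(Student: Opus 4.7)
The plan is to mimic the proof of Theorem \ref{global_stability} line-for-line, replacing only the way the trilinear form involving the fluctuation $u'^{j,n}_R$ is estimated. The essential observation is that the existing estimate \eqref{In1} pays for the convecting field $w=u'^{j,n}_R$ with $\|\nabla w\|$; in three dimensions the Sobolev embedding \eqref{sobolev_ineq} lets us trade a gradient for a stronger Lebesgue norm, so the price should instead be $\|w\|_{L^3}$.

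First, I would test \eqref{EnB-POD-Weak} with $\varphi = u^{j,n+1}_R$, invoke skew-symmetry of $b^{\ast}(<u_R>^n,\cdot,\cdot)$ to kill that trilinear term, and use the standard BDF2 polarization
\begin{equation*}
\bigl(3u^{j,n+1}_R-4u^{j,n}_R+u^{j,n-1}_R,u^{j,n+1}_R\bigr)
=\tfrac12\bigl[\|u^{j,n+1}_R\|^2+\|2u^{j,n+1}_R-u^{j,n}_R\|^2-\|u^{j,n}_R\|^2-\|2u^{j,n}_R-u^{j,n-1}_R\|^2+\|u^{j,n+1}_R-2u^{j,n}_R+u^{j,n-1}_R\|^2\bigr],
\end{equation*}
followed by the dual pairing bound $(f^{j,n+1},u^{j,n+1}_R)\leq \tfrac{\nu\Delta t}{4}\|\nabla u^{j,n+1}_R\|^2+\tfrac{\Delta t}{\nu}\|f^{j,n+1}\|_{-1}^2$, exactly as in \eqref{Stab1}. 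Then, exploiting skew-symmetry once more, I rewrite
\begin{equation*}
b^{\ast}(u'^{j,n}_R,2u^{j,n}_R-u^{j,n-1}_R,u^{j,n+1}_R)
=b^{\ast}(u'^{j,n}_R,u^{j,n+1}_R,u^{j,n+1}_R-2u^{j,n}_R+u^{j,n-1}_R),
\end{equation*}
which is the right form to benefit from the BDF2 error term appearing on the left-hand side.

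The key new step: bound this trilinear term by Hölder with exponents $(3,2,6)$, then Sobolev embedding \eqref{sobolev_ineq} on the $L^6$ factor, and finally the POD inverse inequality \eqref{lm:inverse} to transfer the gradient onto the POD stiffness norm, giving
\begin{equation*}
\Delta t\,|b^{\ast}(u'^{j,n}_R,u^{j,n+1}_R,u^{j,n+1}_R-2u^{j,n}_R+u^{j,n-1}_R)|
\leq C_{se}\,\Delta t\,\|u'^{j,n}_R\|_{L^3}\,\|\nabla u^{j,n+1}_R\|\,\|\hspace{-1pt}|\mathbb{S}_R\|\hspace{-1pt}|_2^{1/2}\,\|u^{j,n+1}_R-2u^{j,n}_R+u^{j,n-1}_R\|.
\end{equation*}
A single Young's inequality then splits this into $\tfrac14\|u^{j,n+1}_R-2u^{j,n}_R+u^{j,n-1}_R\|^2$, which is absorbed by the BDF2 remainder above, and $C_{se}^2\Delta t^2\|\hspace{-1pt}|\mathbb{S}_R\|\hspace{-1pt}|_2\|u'^{j,n}_R\|_{L^3}^2\|\nabla u^{j,n+1}_R\|^2$, which is absorbed into $\tfrac{\nu\Delta t}{2}\|\nabla u^{j,n+1}_R\|^2$ precisely under the hypothesis \eqref{StabCond3D}.

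After these absorptions the inequality reduces to the telescoping form
\begin{equation*}
\tfrac14\bigl(\|u^{j,n+1}_R\|^2+\|2u^{j,n+1}_R-u^{j,n}_R\|^2\bigr)-\tfrac14\bigl(\|u^{j,n}_R\|^2+\|2u^{j,n}_R-u^{j,n-1}_R\|^2\bigr)+\tfrac{\nu\Delta t}{4}\|\nabla u^{j,n+1}_R\|^2\leq \tfrac{\Delta t}{\nu}\|f^{j,n+1}\|_{-1}^2,
\end{equation*}
and summing over $n'=1,\dots,n-1$ yields \eqref{Stability3D}. The main obstacle is really a bookkeeping one: choosing the Young's inequality split so that the $\|\hspace{-1pt}|\mathbb{S}_R\|\hspace{-1pt}|_2^{1/2}$ factor ends up squared (matching the hypothesis \eqref{StabCond3D}, which is linear in $\|\hspace{-1pt}|\mathbb{S}_R\|\hspace{-1pt}|_2$) while simultaneously leaving the BDF2 remainder $\|u^{j,n+1}_R-2u^{j,n}_R+u^{j,n-1}_R\|^2$ with coefficient no larger than $\tfrac14$; everything else is a direct adaptation of the 2D/3D-universal argument.
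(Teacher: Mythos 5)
Your proposal follows the paper's own proof essentially line for line: the same BDF2 polarization, the same skew-symmetric rewriting of the fluctuation trilinear term, the same H\"older $(3,2,6)$ plus Sobolev embedding \eqref{sobolev_ineq} plus POD inverse inequality \eqref{lm:inverse} to arrive at the bound $C_{se}\,\Delta t\,\|\hspace{-1pt}|\mathbb{S}_R\|\hspace{-1pt}|_2^{1/2}\|u'^{j,n}_R\|_{L^3}\|\nabla u^{j,n+1}_R\|\,\|u^{j,n+1}_R-2u^{j,n}_R+u^{j,n-1}_R\|$, and the same Young split and absorption under \eqref{StabCond3D}. The only thing worth noting is that both your argument and the paper's carry the same harmless slack in the constant of the time-step condition, so this is a faithful reconstruction rather than a different route.
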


\begin{proof}
	By \eqref{b-equality} and H$\ddot{o}$lders' inequality, we have
	\begin{equation*}
	\begin{aligned}
	b^{\ast}(u'^{j,n}_{R},&u^{j,n+1}_{R}, u^{j,n+1}_{R} - 2u^{j,n}_{R} + u^{j,n-1}_{R}) \\
	&\leq\frac{1}{2}\|u'^{j,n}_{R}\|_{L^{3}}\|\nabla u^{j,n + 1}_{R}\|_{L^{2}}\|u^{j,n+1}_{R} - 2u^{j,n}_{R} + u^{j,n-1}_{R}\|_{L^{6}} \\
	&\quad+\frac{1}{2}\|u'^{j,n}_{R}\|_{L^{3}}\|u^{j,n + 1}_{R}\|_{L^{6}}\|\nabla(u^{j,n+1}_{R} - 2u^{j,n}_{R} + u^{j,n-1}_{R})\|_{L^{2}}.
	\end{aligned}
	\end{equation*}
	Now applying \eqref{sobolev_ineq} and \eqref{lm:inverse} we have 
	
	\begin{align*}
	&\|u^{j,n+1}_{R} - 2u^{j,n}_{R} + u^{j,n-1}_{R}\|_{L^{6}} \leq C_{se}\|\hspace{-1pt}|  {\mathbb M}_R^{-1} \|\hspace{-1pt}| _2^{\frac{1}{2}}\|\hspace{-1pt}|  {\mathbb S}_R \|\hspace{-1pt}| _2^{\frac{1}{2}} \|u^{j,n+1}_{R} - 2u^{j,n}_{R} + u^{j,n-1}_{R}\| \\
	&\|u^{j,n+1}_{R}\|_{L^{6}} \leq C_{se} \|\nabla u^{j,n+1}_{R}\| \\
	&\|\nabla(u^{j,n+1}_{R} - 2u^{j,n}_{R} + u^{j,n-1}_{R})\|_{L^{2}} \leq \|\hspace{-1pt}|  {\mathbb M}_R^{-1} \|\hspace{-1pt}| _2^{\frac{1}{2}}\|\hspace{-1pt}|  {\mathbb S}_R \|\hspace{-1pt}| _2^{\frac{1}{2}} \|u^{j,n+1}_{R} - 2u^{j,n}_{R} + u^{j,n-1}_{R}\| 
	\end{align*}
	By construction the POD basis functions are orthonormal with respect to the $L^{2}(\Omega)$ inner product, hence $\|\hspace{-1pt}|  {\mathbb M}_R \|\hspace{-1pt}| _2 = \|\hspace{-1pt}|  {\mathbb M}_R^{-1} \|\hspace{-1pt}| _2 = 1$. It then follows that 
	\begin{equation*}
	\begin{aligned}
	\Delta t
	b^{\ast}(u'^{j,n}_{R},&u^{j,n+1}_{R}, u^{j,n+1}_{R} - 2u^{j,n}_{R} + u^{j,n-1}_{R})  \\
	&\leq C_{se} \|\hspace{-1pt}|  {\mathbb S}_R \|\hspace{-1pt}| _2 ^{\frac{1}{2}} \Delta t \|u'^{j,n}_{R}\|_{L^{3}} \|\nabla u^{j,n+1}_{R}\| \|u^{j,n+1}_{R} - 2u^{j,n}_{R} + u^{j,n-1}_{R}\|.
	\end{aligned}
	\end{equation*}
	Now applying Young's inequality, we have
	\begin{equation*}
	\begin{aligned}
	C_{se} & \|\hspace{-1pt}|  {\mathbb S}_R \|\hspace{-1pt}| _2^{\frac{1}{2}} \Delta t \|u'^{j,n}_{R}\|_{L^{3}} \|\nabla u^{j,n+1}_{R}\| \|u^{j,n+1}_{R} - 2u^{j,n}_{R} + u^{j,n-1}_{R}\|  \\ 
	&\leq C_{se}^{2} \|\hspace{-1pt}|  {\mathbb S}_R \|\hspace{-1pt}| _2 \Delta t^{2} \|u'^{j,n}_{R}\|^{2}_{L^{3}} \|\nabla u^{j,n+1}_{R}\|^{2} + \frac{1}{4} \|u^{j,n+1}_{R} - 2u^{j,n}_{R} + u^{j,n-1}_{R}\|^2.
	\end{aligned}
	\end{equation*}
	We then combine like terms and have 
	\begin{equation}
	\label{Stab5}
	\begin{aligned}
	&\frac{1}{4}\left(\|u^{j,n+1}_{R}\|^{2} + \|2u^{j,n+1}_{R} - u^{j,n}_{R}\|^{2}\right) - \frac{1}{4}\left(\|u^{j,n}_{R}\|^{2} + \|2u^{j,n}_{R} - u^{j,n-1}_{R}\|^{2}\right) \\
	&+ \frac{\nu \Delta t}{4} \| \nabla u_{R}^{j,n+1} \|^{2} + \frac{\nu \Delta t}{2}\left(1 - C_{se}^{2}\|\hspace{-1pt}|  {\mathbb S}_R \|\hspace{-1pt}| _2\frac{\Delta t}{\nu}\| u'^{j,n}_{R}\|_{L^{3}}^{2}\right)\|\nabla u^{j,n+1}_{R}\| \\
	&\leq \frac{\Delta t}{\nu}\|f^{j,n+1}\|^{2}_{-1}.
	\end{aligned}
	\end{equation}
	Now under condition \eqref{StabCond3D}, it follows that
	\begin{equation*}
	\frac{\nu \Delta t}{2}\left(1 - C_{se}^{2}\|\hspace{-1pt}|  {\mathbb S}_R \|\hspace{-1pt}| _2\frac{\Delta t}{\nu}\| u'^{j,n}_{R}\|_{L^{3}}^{2}\right)\|\nabla u^{j,n+1}_{R}\| \geq 0. 
	\end{equation*} 
	\eqref{Stab5} then simplifies to
	\begin{equation*}
	\begin{aligned}
	&\frac{1}{4}\left(\|u^{j,n+1}_{R}\|^{2} + \|2u^{j,n+1}_{R} - u^{j,n}_{R}\|^{2}\right) - \frac{1}{4}\left(\|u^{j,n}_{R}\|^{2} + \|2u^{j,n}_{R} - u^{j,n-1}_{R}\|^{2}\right) \\
	&\qquad+ \frac{\nu \Delta t}{4} \| \nabla u_{R}^{j,n+1} \|^{2} \leq \frac{\Delta t}{\nu}\|f^{j,n+1}\|^{2}_{-1}.
	\end{aligned}
	\end{equation*}
	Summing up the above inequality results in \eqref{Stability3D}.
\end{proof}

\section{Error analysis of En-POD}

We next provide an error analysis for En-POD solutions. First, we present several results obtained in \cite{GJS17}, which we  use in the analysis.

We first define the $L^2(\Omega)$ projection operator $\Pi_R$: $L^2(\Omega) \rightarrow X_R$ by
\begin{equation*}
(u-\Pi_R u , \varphi)=0\qquad \forall \varphi \in X_R.
\end{equation*}
\begin{lemma} \label{lm:L2err} 
	{\rm[$L^2(\Omega)$ norm of the error between snapshots and their projections onto the POD space]}  We have
	\begin{equation*}
	\frac{1}{J_S(N_S+1)} \sum_{j=1}^{J_S} \sum_{m=0}^{N_S}\Big \|  u_{h,S}^{j,m}-\sum_{i=1}^R (u_{h,S}^{j,m}, \varphi_i)\varphi_i\Big \| ^2 = \sum_{i=R+1}^{J_S(N_S+1)} \lambda_i 
	\end{equation*}
	and thus for $j=1,\ldots,J_S$,
	\begin{equation*}
	\frac{1}{N_S+1} \sum_{m=0}^{N_S}\Big \|  u_{h,S}^{j,m}-\sum_{i=1}^R (u_{h,S}^{j,m}, \varphi_i)\varphi_i\Big \| ^2 \leq J_S\sum_{i=R+1}^{J_S(N_S+1)} \lambda_i .
	\end{equation*}
\end{lemma}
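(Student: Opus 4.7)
The plan is to unfold the projection error using the $L^2(\Omega)$-orthonormality of the full basis $\{\varphi_i\}_{i=1}^{S}$ (where $S := J_S(N_S+1)$) for $X_{h,S}$, then reduce the resulting coefficient sums to eigenvalue sums via the eigenvalue problem for the correlation matrix $\mathbb{C}$, and finally exploit non-negativity of the summands to deduce the second inequality from the first.

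First, since each snapshot $u_{h,S}^{j,m}$ lies in $X_{h,S} = \text{span}\{\varphi_i\}_{i=1}^S$, I would expand $u_{h,S}^{j,m} = \sum_{i=1}^S (u_{h,S}^{j,m}, \varphi_i)\varphi_i$, so that orthonormality gives the Parseval identity
\[
\Big\| u_{h,S}^{j,m} - \sum_{i=1}^R (u_{h,S}^{j,m}, \varphi_i)\varphi_i \Big\|^2 = \sum_{i=R+1}^S (u_{h,S}^{j,m}, \varphi_i)^2.
\]

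The next step is to compute $(u_{h,S}^{j,m}, \varphi_i)$ algebraically by using the representation $\vec\varphi_i = \lambda_i^{-1/2}\mathbb{A}\vec a_i$ from Section 3 together with the fact that $L^2(\Omega)$ inner products translate to mass-weighted dot products on finite element coefficient vectors. Letting $k=k(j,m)$ denote the column of $\mathbb{A}$ associated with the snapshot $u_{h,S}^{j,m}$, one obtains $(u_{h,S}^{j,m},\varphi_i) = (\vec u_S^{j,m})^T\mathbb{M}\vec\varphi_i = \lambda_i^{-1/2}[\mathbb{A}^T\mathbb{M}\mathbb{A}\,\vec a_i]_k = \lambda_i^{-1/2}[\mathbb{C}\vec a_i]_k = \sqrt{\lambda_i}\,[\vec a_i]_k$, using $\mathbb{C}\vec a_i = \lambda_i\vec a_i$. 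Summing the squared coefficients over all snapshots and invoking $|\vec a_i| = 1$ then produces $\sum_{j,m}(u_{h,S}^{j,m},\varphi_i)^2 = \lambda_i\sum_k [\vec a_i]_k^2 = \lambda_i$, and combining with the Parseval step above (after applying the $1/[J_S(N_S+1)]$ normalization) gives the first identity of the lemma.

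For the second bound I would simply observe that every projection-error term is non-negative, so retaining only a single index $j$ on the left-hand side of the first identity yields
\[
\sum_{m=0}^{N_S}\Big\| u_{h,S}^{j,m} - \sum_{i=1}^R (u_{h,S}^{j,m}, \varphi_i)\varphi_i \Big\|^2 \leq \sum_{j'=1}^{J_S}\sum_{m=0}^{N_S}\Big\| u_{h,S}^{j',m} - \sum_{i=1}^R (u_{h,S}^{j',m}, \varphi_i)\varphi_i \Big\|^2,
\]
and dividing by $N_S+1$ converts the $1/[J_S(N_S+1)]$ on the right into a factor of $J_S$, giving the stated inequality. The argument is essentially bookkeeping; the main subtlety is keeping the indexing between snapshot labels $(j,m)$ and columns of $\mathbb{A}$ straight, and being careful to insert the finite element mass matrix $\mathbb{M}$ wherever one passes between $L^2(\Omega)$ inner products and Euclidean inner products on coefficient vectors, so that the eigenrelation $\mathbb{C}\vec a_i = \lambda_i\vec a_i$ can be applied cleanly.
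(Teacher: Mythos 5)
The paper itself offers no proof of this lemma: it is imported from \cite{GJS17} (``we present several results obtained in [GJS17]''), so there is no in-paper argument to compare against. Your route --- Parseval's identity with respect to the $L^{2}$-orthonormal POD basis of $X_{h,S}$, reduction of the coefficient sums to eigenvalues via $\vec\varphi_i=\lambda_i^{-1/2}\mathbb{A}\vec a_i$ and $\mathbb{C}\vec a_i=\lambda_i\vec a_i$, and then discarding the nonnegative terms with $j'\neq j$ to get the single-realization bound --- is the standard proof of this fact, and the individual computations are right: $(u_{h,S}^{j,m},\varphi_i)=\sqrt{\lambda_i}\,[\vec a_i]_k$ is correct (the mass matrix is inserted in the right place), and the passage from the first identity to the second inequality is sound.

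There is, however, one step that does not go through as written: the closing phrase ``after applying the $1/[J_S(N_S+1)]$ normalization gives the first identity.'' What your computation actually establishes is the equality
\[
\sum_{j=1}^{J_S}\sum_{m=0}^{N_S}\Big\|u_{h,S}^{j,m}-\sum_{i=1}^{R}(u_{h,S}^{j,m},\varphi_i)\varphi_i\Big\|^{2}=\sum_{i=R+1}^{J_S(N_S+1)}\lambda_i ,
\]
and multiplying an equality by $1/[J_S(N_S+1)]$ rescales the right-hand side as well, so you do not arrive at the statement of the lemma. The discrepancy is a normalization mismatch between the lemma and Section 3: with the paper's definition $\mathbb{C}=\mathbb{A}^{T}\mathbb{M}\mathbb{A}$ (the one under which $\vec\varphi_i=\lambda_i^{-1/2}\mathbb{A}\vec a_i$ is $L^{2}$-orthonormal), the unnormalized identity you derived is the correct one; the version with the $1/[J_S(N_S+1)]$ prefactor holds only for the alternative convention $\mathbb{C}=\frac{1}{J_S(N_S+1)}\mathbb{A}^{T}\mathbb{M}\mathbb{A}$, whose eigenvalues are those of the paper's $\mathbb{C}$ divided by $J_S(N_S+1)$. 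You should fix one convention, redo the coefficient computation consistently with it, and state the identity accordingly (the two versions differ only by the harmless constant $J_S(N_S+1)$, and the downstream inequality $\frac{1}{N_S+1}\sum_{m}\|\cdot\|^{2}\leq J_S\sum_{i>R}\lambda_i$ is unaffected under the normalized convention); as written, the last sentence of your first-identity argument asserts an equality that your own computation contradicts.
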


\begin{lemma}\label{lm:H1err}{\rm [$H^1(\Omega)$ norm of the error between snapshots and their projections in the POD space.]}  We have
	\begin{equation*}
	\frac{1}{J_S(N_S+1)} \sum_{j=1}^{J_S} \sum_{m=0}^{N_S}\Big \| \nabla \Big( u_{h,S}^{j,m}-\sum_{i=1}^R (u_{h,S}^{j,m}, \varphi_i)\varphi_i\Big)\Big\| ^2 
	=\sum_{i=R+1}^{J_S(N_S+1)} \lambda_i  \|  \nabla \varphi_i\|^2
	\end{equation*}
	and thus, for $j=1,\ldots,J_S$,
	\begin{equation*}
	\frac{1}{N_S+1} \sum_{m=0}^{N_S} \Big\| \nabla\Big( u_{h,S}^{j,m}-\sum_{i=1}^R (u_{h,S}^{j,m}, \varphi_i)\varphi_i \Big)\Big\| ^2 \leq J_S\sum_{i=R+1}^{J_S(N_S+1)} \lambda_i  \|  \nabla \varphi_i\|^2.
	\end{equation*}
\end{lemma}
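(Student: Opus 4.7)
The plan is to exploit the fact that the full POD basis $\{\varphi_i\}_{i=1}^{S}$ with $S:=J_S(N_S+1)$ is an orthonormal basis of $X_{h,S}$, so every snapshot admits the finite expansion $u_{h,S}^{j,m}=\sum_{i=1}^{S}(u_{h,S}^{j,m},\varphi_i)\varphi_i$, and the truncation error is exactly the tail $\sum_{i=R+1}^{S}(u_{h,S}^{j,m},\varphi_i)\varphi_i$. The proof parallels that of Lemma \ref{lm:L2err}; the only new feature is that, after applying $\nabla$ and squaring in $L^2$, one obtains a double sum
$$\sum_{j=1}^{J_S}\sum_{m=0}^{N_S}\sum_{i,i'=R+1}^{S}(u_{h,S}^{j,m},\varphi_i)(u_{h,S}^{j,m},\varphi_{i'})(\nabla\varphi_i,\nabla\varphi_{i'}),$$
so the whole argument reduces to showing that the snapshot sum of products of Fourier coefficients diagonalises the inner bracket in $i,i'$.

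To extract that diagonalisation I would re-index the snapshots by a single label $k\in\{1,\ldots,S\}$ consistent with the column ordering of $\mathbb{A}$ and pass to coefficient vectors through the mass-matrix identity $(u_{h,S}^k,\varphi_i)=\vec u_S^{k\,T}\mathbb{M}\vec\varphi_i$. Substituting $\vec\varphi_i=\lambda_i^{-1/2}\mathbb{A}\vec a_i$, recognising $\vec u_S^{k\,T}\mathbb{M}\mathbb{A}$ as the $k$-th row of $\mathbb{C}=\mathbb{A}^T\mathbb{M}\mathbb{A}$, and invoking $\mathbb{C}\vec a_i=\lambda_i\vec a_i$ give the key identity
$$(u_{h,S}^k,\varphi_i)=\sqrt{\lambda_i}\,(\vec a_i)_k.$$
Since the $\vec a_i$ are orthonormal Euclidean vectors, $\sum_{k=1}^{S}(u_{h,S}^k,\varphi_i)(u_{h,S}^k,\varphi_{i'})=\sqrt{\lambda_i\lambda_{i'}}\,\vec a_i\cdot\vec a_{i'}=\lambda_i\delta_{ii'}$. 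Plugging this back collapses the double sum to $\sum_{i=R+1}^{S}\lambda_i\|\nabla\varphi_i\|^2$, and dividing by $S=J_S(N_S+1)$ delivers the claimed equality.

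The inequality is then immediate: retaining only one value of $j$ on the left-hand side and using non-negativity of each summand yields
$$\sum_{m=0}^{N_S}\Big\|\nabla\bigl(u_{h,S}^{j,m}-\textstyle\sum_{i=1}^{R}(u_{h,S}^{j,m},\varphi_i)\varphi_i\bigr)\Big\|^2\le J_S(N_S+1)\sum_{i=R+1}^{S}\lambda_i\|\nabla\varphi_i\|^2,$$
and dividing by $N_S+1$ produces the factor $J_S$ in front of the eigenvalue sum.

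The main obstacle is bookkeeping rather than analysis: one has to keep the $(j,m)$-indexing of snapshots and the flat $k$-indexing of $\mathbb{A}$ and $\mathbb{C}$ straight so as to establish $(u_{h,S}^k,\varphi_i)=\sqrt{\lambda_i}(\vec a_i)_k$ without ambiguity. What makes the argument painless is precisely what makes the $H^1$ analogue look more subtle than the $L^2$ version: although the gradients $\nabla\varphi_i$ are \emph{not} orthogonal and no pointwise bound on $\|\nabla\varphi_i\|$ is available, the snapshot sum kills all off-diagonal contributions $(\nabla\varphi_i,\nabla\varphi_{i'})$ for $i\neq i'$, so those terms never need to be estimated. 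No PDE or finite-element machinery enters beyond the defining properties of the POD basis.
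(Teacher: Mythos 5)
The paper itself does not prove this lemma --- it is imported without proof from \cite{GJS17} --- so there is no in-paper argument to compare against. Your proof is the standard diagonalization argument from the POD literature (essentially Kunisch--Volkwein): expand each snapshot in the full orthonormal basis $\{\varphi_i\}_{i=1}^{S}$ of $X_{h,S}$, identify the truncation error with the tail $\sum_{i=R+1}^{S}(u_{h,S}^{j,m},\varphi_i)\varphi_i$, and use the identity $(u_{h,S}^{k},\varphi_i)=\sqrt{\lambda_i}\,(\vec{a}_i)_k$ together with orthonormality of the eigenvectors $\vec{a}_i$ to annihilate the off-diagonal contributions $(\nabla\varphi_i,\nabla\varphi_{i'})$, $i\neq i'$. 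That identity is correctly derived from $\vec{\varphi}_i=\lambda_i^{-1/2}\mathbb{A}\vec{a}_i$ and $\mathbb{C}\vec{a}_i=\lambda_i\vec{a}_i$, and your observation that the non-orthogonality of the gradients is irrelevant because the snapshot sum diagonalizes in $(i,i')$ is exactly the right point. The passage from the equality to the single-$j$ inequality (drop the other ensemble members, divide by $N_S+1$) is also correct.

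The one concrete defect is a normalization mismatch in the last step of the equality. With the correlation matrix as defined in Section 3 of this paper, $\mathbb{C}=\mathbb{A}^{T}\mathbb{M}\mathbb{A}$ with no averaging, your computation gives
\begin{equation*}
\sum_{j=1}^{J_S}\sum_{m=0}^{N_S}\Big\|\nabla\Big(u_{h,S}^{j,m}-\sum_{i=1}^{R}(u_{h,S}^{j,m},\varphi_i)\varphi_i\Big)\Big\|^2=\sum_{i=R+1}^{S}\lambda_i\|\nabla\varphi_i\|^2
\end{equation*}
with \emph{no} prefactor on either side; dividing by $S=J_S(N_S+1)$ then produces $\frac{1}{S}\sum_{i=R+1}^{S}\lambda_i\|\nabla\varphi_i\|^2$ on the right, not the displayed $\sum_{i=R+1}^{S}\lambda_i\|\nabla\varphi_i\|^2$. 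The lemma as stated is the version one obtains when the correlation matrix carries the averaging factor, $\mathbb{C}=\frac{1}{J_S(N_S+1)}\mathbb{A}^{T}\mathbb{M}\mathbb{A}$ (the convention of \cite{GJS17,KV01}, under which the $\lambda_i$ are rescaled by exactly that factor). So either prove the unaveraged equality, or adopt the normalized $\mathbb{C}$ from the outset; as written, ``dividing by $S$ delivers the claimed equality'' conflates the two conventions. The inequality in the second display is unaffected once the equality is stated consistently.
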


\begin{lemma}\label{lm:Projerr}{\rm [Error in the projection onto the POD space]}
	Consider the partition $0=t_0<t_1< \cdots < t_{N_S} = T$ used in Section \ref{POD_sec}. 
	For any $u \in H^{1}(0,T;[H^{s+1}(\Omega)]^d)$, let $u^{m}=u(\cdot, t_m)$. Then, the error in the projection onto the POD space $X_R$ satisfies the estimates
	\begin{equation*}
	\begin{aligned}
	\frac{1}{N_S+1}& \sum_{m=0}^{N_S} \|  u^{m}-\Pi_R u^m\| ^2
	\\&
	\leq \inf_{j\in\{1,\ldots,J_S\}}\frac{2}{N_S+1} \sum_{m=0}^{N_S} \|  u^{m}-u^{j,m}_S\| ^2+C\left( h^{2s+2} + \triangle t^4  \right)+ 2J_S\sum_{i=R+1}^{J_S(N_S+1)} \lambda_i 
	\end{aligned}
	\end{equation*}
	\begin{equation*}
	\begin{aligned}
	\frac{1}{N_S+1} &\sum_{m=0}^{N_S} \|  \nabla \left(u^{m}-\Pi_R u^m\right)\| ^2
	\\&
	\leq \inf_{{j\in\{1,\ldots,J_S\}}}\frac{2}{N_S+1} \sum_{m=0}^{N_S} \left(\|  \nabla (u^{m}-u_S^{j,m})\| ^2+\|\hspace{-1pt}|  {\mathbb S}_R \|\hspace{-1pt}| _2 \|   u^{m}-u_S^{j,m}\| ^2  \right)
	\\&
	+(C+h^2 \|\hspace{-1pt}|  {\mathbb S}_R \|\hspace{-1pt}| _2 ) h^{2s} + (C+\|\hspace{-1pt}|  {\mathbb S}_R \|\hspace{-1pt}| _2 )\triangle t^4  
	+ 2J_S\sum_{i=R+1}^{J_S(N_S+1)} \|  \nabla \varphi_i\| ^2\lambda_i . 
	\end{aligned}
	\end{equation*}
\end{lemma}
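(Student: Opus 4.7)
\smallskip\noindent\textbf{Proof proposal.} The plan is to derive both estimates from a single three-way triangle-inequality splitting that inserts the exact snapshot $u_S^{j,m}$, its finite element counterpart $u_{h,S}^{j,m}$, and the POD image $\Pi_R u_{h,S}^{j,m}$ between $u^m$ and $\Pi_R u^m$, and then to invoke three already-available ingredients: the finite element snapshot error estimates of Section 2, namely $\|u_S^{j,m}-u_{h,S}^{j,m}\|\le C(h^{s+1}+\Delta t^2)$ and $\|\nabla(u_S^{j,m}-u_{h,S}^{j,m})\|\le C(h^s+\Delta t^2)$; Lemma \ref{lm:L2err} for the $L^2$ POD truncation; and Lemma \ref{lm:H1err} for its $H^1$ counterpart.

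For the $L^2$ bound I would exploit the best-approximation property of the $L^2$ projector $\Pi_R$: since $\Pi_R u_{h,S}^{j,m}\in X_R$,
\[
\|u^m-\Pi_R u^m\|\le\|u^m-\Pi_R u_{h,S}^{j,m}\|\le\|u^m-u_S^{j,m}\|+\|u_S^{j,m}-u_{h,S}^{j,m}\|+\|u_{h,S}^{j,m}-\Pi_R u_{h,S}^{j,m}\|.
\]
Squaring, averaging $\frac{1}{N_S+1}\sum_{m=0}^{N_S}$, substituting the FE bound into the middle term and Lemma \ref{lm:L2err} into the last, and finally taking the infimum over the training index $j\in\{1,\ldots,J_S\}$ delivers the first estimate, with all intermediate $(a+b+c)^2$ constants absorbed into the generic $C$.

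The $H^1$ inequality requires a genuinely different step, because $\Pi_R$ is only an $L^2$-projector and therefore offers no $H^1$ best-approximation property. I would use instead the decomposition
\[
\nabla(u^m-\Pi_R u^m)=\nabla(u^m-u_{h,S}^{j,m})+\nabla(u_{h,S}^{j,m}-\Pi_R u_{h,S}^{j,m})+\nabla\Pi_R(u_{h,S}^{j,m}-u^m),
\]
handle the first two pieces via the $H^1$ FE error and Lemma \ref{lm:H1err} respectively, and dispatch the last piece using the inverse-type inequality \eqref{lm:inverse} applied to the $X_R$-element $\Pi_R(u_{h,S}^{j,m}-u^m)$, together with the $L^2$-stability $\|\Pi_R w\|\le\|w\|$. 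This yields
\[
\|\nabla\Pi_R(u_{h,S}^{j,m}-u^m)\|\le\|\hspace{-1pt}|{\mathbb S}_R\|\hspace{-1pt}|_2^{1/2}\|u_{h,S}^{j,m}-u^m\|;
\]
splitting $\|u_{h,S}^{j,m}-u^m\|$ once more through $u_S^{j,m}$ then produces exactly the $\|\hspace{-1pt}|{\mathbb S}_R\|\hspace{-1pt}|_2$-weighted $L^2$ FE contribution (accounting for the $h^2\|\hspace{-1pt}|{\mathbb S}_R\|\hspace{-1pt}|_2 h^{2s}$ and $\|\hspace{-1pt}|{\mathbb S}_R\|\hspace{-1pt}|_2\Delta t^4$ terms) and the $\|\hspace{-1pt}|{\mathbb S}_R\|\hspace{-1pt}|_2$-weighted snapshot-to-target term appearing in the stated estimate.

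The main obstacle is precisely this mismatch between the $L^2$ nature of $\Pi_R$ and the $H^1$ quantity being estimated: one cannot avoid invoking \eqref{lm:inverse} on some POD-space element, and this is what forces the appearance of $\|\hspace{-1pt}|{\mathbb S}_R\|\hspace{-1pt}|_2$ on the right-hand side. Everything else is routine triangle-inequality and Young's-inequality bookkeeping, and once the correct route through $\Pi_R(u_{h,S}^{j,m}-u^m)$ is identified the constants assemble without further difficulty.
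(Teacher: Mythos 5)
The paper never proves this lemma --- it is imported verbatim from \cite{GJS17} (``we present several results obtained in \cite{GJS17}'') --- so there is no in-text argument to compare against; your reconstruction is correct and is exactly the standard route: $L^2$ best approximation of $\Pi_R$ plus a triangle inequality through $u_S^{j,m}$ and $u_{h,S}^{j,m}$ for the first bound (using the FE snapshot error and Lemma \ref{lm:L2err}), and the three-way gradient splitting in which the term $\nabla\Pi_R(u_{h,S}^{j,m}-u^m)$ is handled by the inverse estimate \eqref{lm:inverse} together with $L^2$-stability of $\Pi_R$ for the second (using the $H^1$ FE error and Lemma \ref{lm:H1err}), which is precisely what generates every term, including the $\|\hspace{-1pt}|{\mathbb S}_R\|\hspace{-1pt}|_2$-weighted ones, on the stated right-hand sides. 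The only quibble is cosmetic: squaring a three-term triangle inequality gives leading constants $3$ (or $2+\varepsilon$ after a weighted Young's inequality) rather than the stated $2$, which is immaterial since Theorem \ref{th:errEn-POD} absorbs everything into a generic $C$.
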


To bound the error between the POD approximations and the true solutions, we assume
the following regularity for the true solutions and body forces:
\begin{gather*}
u^{j} \in L^{\infty}(0,T;H^{s+1}(\Omega))\cap H^{1}(0,T;H^{s+1}(\Omega))\cap
H^{2}(0,T;L^{2}(\Omega)),\\
p^{j} \in L^{2}(0,T;H^{s}(\Omega)),\quad \text{and}\quad f^{j} \in L^{2}%
(0,T;L^{2}(\Omega)).
\end{gather*}

We assume the following estimate is also valid as done in \cite{IW14}.
\begin{assumption}\label{assumption1}
	Consider the partition $0=t_0<t_1< \cdots < t_{N_S} = T$ used in Section \ref{POD_sec}. 
	For any $u \in H^{1}(0,T;[H^{s+1}(\Omega)]^d)$, let $u^{m}=u(\cdot, t_m)$. Then, the error in the projection onto the POD space $X_R$ satisfies the estimates
	
	\begin{equation*}
	\begin{aligned}
	& \|  \nabla \left(u^{m}-\Pi_R u^m\right)\| ^2
	\\&
	\leq \inf_{{j\in\{1,\ldots,J_S\}}}\frac{2}{N_S+1} \sum_{m=0}^{N_S} \left(\|  \nabla (u^{m}-u_S^{j,m})\| ^2+\|\hspace{-1pt}|  {\mathbb S}_R \|\hspace{-1pt}| _2 \|   u^{m}-u_S^{j,m}\| ^2  \right)
	\\&
	+(C+h^2 \|\hspace{-1pt}|  {\mathbb S}_R \|\hspace{-1pt}| _2 ) h^{2s} + (C+\|\hspace{-1pt}|  {\mathbb S}_R \|\hspace{-1pt}| _2 )\triangle t^4  
	+ 2J_S\sum_{i=R+1}^{J_S(N_S+1)} \|  \nabla \varphi_i\| ^2\lambda_i . 
	\end{aligned}
	\end{equation*}
\end{assumption}

Let $e^{j,n}=u^{j,n}-u_{R}^{j,n}$ denote the error between the true solution
and the POD approximation; then, we have the following error estimates.

\begin{theorem}
	[Error analysis of En-POD]\label{th:errEn-POD} Consider the
	method \eqref{EnB-POD-Weak} and the partition $0=t_0<t_1< \cdots <t_{N_S}=T $ used in Section \ref{POD_sec}. Suppose that for any $0\leq n\leq N_S$, the following conditions hold
	\begin{gather}
	\frac{C\triangle t \|\hspace{-1pt}|  {\mathbb S}_R \|\hspace{-1pt}| _2^{1/2}}{\nu}\| \nabla
	(2u_{R}^{ j,n}-u_R^{j,n-1}-<u_R>^n)\| ^{2} <1\text{ , \qquad} j=1,...,J. 
	\label{cond:err}
	\end{gather}
	Then, for any $1\leq N\leq N_S$, there is a positive constant $C$ such that
	\begin{equation*}
	\begin{aligned}
	\frac{1}{2}&\|  e^{j,N}\| ^{2}+\frac{\Delta t}{32}\sum_{n=1}^{N-1}\nu\| \nabla e^{j,n+1}\| ^{2}\\
	&\leq C\Bigg (\Delta t^4 + \|\hspace{-1pt}|  {\mathbb S}_R \|\hspace{-1pt}| _2^{-1/2} \Delta t^3+ \|\hspace{-1pt}|  {\mathbb S}_R \|\hspace{-1pt}| _2 \Delta t^4+ \|\hspace{-1pt}|  {\mathbb S}_R \|\hspace{-1pt}| _2 h^{2s+2}\\
	&\quad+  \|\hspace{-1pt}|  {\mathbb S}_R \|\hspace{-1pt}| _2^{-1/2} h^{2s}\Delta t^{-1}+ \|\hspace{-1pt}|  {\mathbb S}_R \|\hspace{-1pt}| _2^{1/2} h^{2s+2}\Delta t^{-1}
	+  \|\hspace{-1pt}|  {\mathbb S}_R \|\hspace{-1pt}| _2^{1/2} \Delta t^3+h^{2s}\\
	& \quad+(1+N_S\ \|\hspace{-1pt}|  {\mathbb S}_R \|\hspace{-1pt}| _2^{-1/2}) \Big(\inf_{{j\in\{1,\ldots,J_S\}}}\frac{1}{N_S} \sum_{m=1}^{N_S} (\|  \nabla (u^{m}-u_S^{j,m})\| ^2\\
	&\quad+\|\hspace{-1pt}|  {\mathbb S}_R \|\hspace{-1pt}| _2 \|   u^{m}-u_S^{j,m}\| ^2  )+ J_S\sum_{i=R+1}^{J_SN_S} \|  \nabla \varphi_i\| ^2\lambda_i  \Big)\Bigg).
	\end{aligned}
	\end{equation*}
\end{theorem}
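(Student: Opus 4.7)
The plan is to follow the standard projection-based POD error strategy and combine it with the ensemble-splitting manipulations already used in the proof of Theorem \ref{global_stability}. First I decompose
$$
e^{j,n} = \eta^{j,n} + \phi_R^{j,n}, \qquad \eta^{j,n} := u^{j,n}-\Pi_R u^{j,n},\ \phi_R^{j,n} := \Pi_R u^{j,n} - u_R^{j,n} \in X_R,
$$
so that $\eta^{j,n}$ is controlled immediately by Assumption \ref{assumption1} and Lemma \ref{lm:Projerr}, and the real work reduces to bounding $\phi_R^{j,n}$. The final claim then follows from the triangle inequality $\|e^{j,N}\|\le \|\eta^{j,N}\|+\|\phi_R^{j,N}\|$ together with the corresponding $H^1$ estimate.

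Next, I evaluate the weak formulation \eqref{wfwf} at $t^{n+1}$ and rewrite $u_t^{j,n+1}$ as the BDF2 quotient $(3u^{j,n+1}-4u^{j,n}+u^{j,n-1})/(2\Delta t)+\tau^{j,n+1}$, where $\tau^{j,n+1}$ is the usual $O(\Delta t^2)$ truncation controlled by $u_{ttt}^j$. I then add and subtract the two ensemble-linearized trilinear forms $b^\ast(\langle u\rangle^n,u^{j,n+1},\cdot)$ and $b^\ast(u'^{j,n},2u^{j,n}-u^{j,n-1},\cdot)$ so as to match \eqref{EnB-POD-Weak}; since $\langle u\rangle^n + u'^{j,n} = 2u^{j,n}-u^{j,n-1} = u^{j,n+1}+O(\Delta t^2)$, the discrepancy between the true convection and its ensemble splitting is a consistency residual of order $\Delta t^2$ plus Taylor remainders. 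Subtracting \eqref{EnB-POD-Weak} and testing with $\varphi=\phi_R^{j,n+1}\in X_R$ (so that $(\eta^{j,m},\varphi)=0$ kills the $L^2$ projection residue in the time-derivative pairing) produces the BDF2 energy identity in the same form as \eqref{Stab1}, with right-hand side made of $(\tau^{j,n+1},\phi_R^{j,n+1})$, the diffusion consistency $\nu(\nabla\eta^{j,n+1},\nabla\phi_R^{j,n+1})$, a pressure consistency $(p^{j,n+1}-q_R,\nabla\cdot\phi_R^{j,n+1})$ for any $q_R\in Q_h$ (exploiting the fact that $\phi_R^{j,n+1}$ is only discretely divergence-free), and trilinear consistency and error terms in $\eta$ and $\phi_R$.

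The main obstacle, and the analytical heart of the proof, is controlling the trilinear contribution
$$
\Delta t\, b^\ast\bigl(\phi_R'^{j,n}+\eta'^{j,n}+\text{cons.},\ 2u^{j,n}-u^{j,n-1},\ \phi_R^{j,n+1}\bigr),
$$
together with the principal ensemble-linearized term, which after the same skew-symmetric rearrangement used in \eqref{Stab2} and the inverse estimate \eqref{lm:inverse} applied to the BDF2 difference $\phi_R^{j,n+1}-2\phi_R^{j,n}+\phi_R^{j,n-1}\in X_R$, produces on the left-hand side the factor
$$
1 - C\|\hspace{-1pt}|  {\mathbb S}_R \|\hspace{-1pt}|_2^{1/2}\,\frac{\Delta t}{\nu}\,\|\nabla(2u_R^{j,n}-u_R^{j,n-1}-\langle u_R\rangle^n)\|^2
$$
multiplying part of the viscous dissipation. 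Hypothesis \eqref{cond:err} is precisely what makes this factor nonnegative, letting us retain a clean fraction $\tfrac{\nu\Delta t}{32}\|\nabla\phi_R^{j,n+1}\|^2$ on the left; the remaining trilinear cross terms are bounded via \eqref{In1}, \eqref{In2} and \eqref{lm:inverse} and absorbed either into this residual dissipation or into $\tfrac14\|\phi_R^{j,n+1}-2\phi_R^{j,n}+\phi_R^{j,n-1}\|^2$.

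Finally, every surviving right-hand-side term is estimated by Cauchy--Schwarz and Young's inequality together with Assumption \ref{assumption1} and Lemma \ref{lm:Projerr}; the Young weights carrying powers of $\|\hspace{-1pt}|  {\mathbb S}_R \|\hspace{-1pt}|_2^{\pm 1/2}$ are chosen to balance the $\|\hspace{-1pt}|  {\mathbb S}_R \|\hspace{-1pt}|_2^{1/2}$ factors generated by \eqref{lm:inverse}, which is the source of the asymmetric collection of powers $\{1,\|\hspace{-1pt}|  {\mathbb S}_R \|\hspace{-1pt}|_2^{\pm 1/2},\|\hspace{-1pt}|  {\mathbb S}_R \|\hspace{-1pt}|_2\}$ visible in the stated bound. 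Summing the resulting telescoping energy inequality from $n=1$ to $N-1$ and applying a discrete Gr\"onwall lemma to absorb the few remaining $\|\phi_R^{j,n'+1}\|^2$ terms yields the desired estimate for $\phi_R^{j,N}$, and the triangle inequality then gives the claimed bound on $e^{j,N}$.
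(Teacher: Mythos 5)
Your proposal follows essentially the same route as the paper's proof: the same $\eta+\xi$ splitting via the $L^2$ projection $\Pi_R$, the same BDF2 consistency/truncation setup, the same skew-symmetric rearrangement of the critical trilinear term combined with the inverse estimate \eqref{lm:inverse} on the second difference so that condition \eqref{cond:err} absorbs it into the dissipation, and the same closing steps via Lemma \ref{lm:Projerr}, Assumption \ref{assumption1}, discrete Gr\"onwall, and the triangle inequality. The approach is correct and matches the paper in all essential respects.
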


{\allowdisplaybreaks

	\begin{proof}
		For $j=1,\cdots, J$, the true solutions $u^{j}$ of the Navier-Stokes equations   satisfies
		\begin{equation}\label{eq:convtrue}
		\begin{aligned}
		&(\frac{3u^{j,n+1}-4u^{j,n}+u^{j,n-1}}{2\Delta t}, \varphi)+ b^{*}(u^{j,n+1},
		u^{j,n+1}, \varphi)+ \nu(\nabla u^{j, n+1}, \nabla v_{h})\\
		&\quad- (p^{j, n+1},\nabla\cdot \varphi)
		=(f^{j,n+1}, \varphi)+ Intp(u^{j,n+1};\varphi)\qquad\text{for any }
		\varphi\in X_{R},
		\end{aligned}
		\end{equation}
		where $Intp(u^{j,n+1};\varphi)$ is defined as
		\begin{equation*}
		Intp(u^{j,n+1};\varphi)=(\frac{3u^{j,n+1}-4u^{j,n}+u^{j,n-1}}{2\Delta t}-u_{t}^j
		(t_{n+1}),\varphi).
		\end{equation*}

		\noindent Let
		\begin{equation}
		e^{j,n}=u^{j,n}-u_{R}^{j,n}=(u^{j,n}-\Pi_{R} u^{j,n})+(\Pi_{R} u^{j,n}-u_{R}^{j,n})=\eta^{j,n}+\xi_{R}^{j,n} \text{ ,\qquad} j=1,...,J\text{ ,}\nonumber
		\end{equation}

		\noindent where $\Pi_{R} u_{j}^{n} \in X_{R} $ is the $L^2$ projection of $u^{j,n}$
		in $X_{R}.$ Subtracting (\ref{EnB-POD-Weak}) from (\ref{eq:convtrue}) gives
		
		\begin{gather}
		(\frac{3\xi_{R}^{j,n+1}-4\xi_{R}^{j,n}+\xi_{R}^{j,n-1}}{ 2\Delta t},\varphi) +\nu(\nabla\xi
		_{R}^{j,n+1},\nabla \varphi)+b^{*}(u^{j,n+1},u^{j,n+1},\varphi)\nonumber\\
		-b^{*}(2u_R^{j,n}-u_R^{j,n-1}-<u_R>^n,2u_{R}^{j,n}-u_R^{j,n-1},\varphi)\nonumber\\
		-b^{*}(<u_R>^n,u_{R}^{j,n+1},\varphi) -(p^{j,n+1},\nabla\cdot \varphi)\label{eq:err}\\
		=-(\frac{3\eta^{j,n+1}-4\eta^{j,n}+\eta^{j,n-1}}{2\Delta t},\varphi) -\nu(\nabla\eta^{j,n+1},\nabla \varphi) +Intp(u^{j,n+1};\varphi)\text{ .}\nonumber
		\end{gather}
		Set $\varphi=\xi_{R}^{j,n+1}\in X_{R}$ and rearrange the nonlinear
		terms. By the definition of the $L^2$ projection, we have $(3\eta^{j,n+1}-4\eta^{j,n}+\eta^{j,n-1}, \xi_{R}^{j,n+1})=0$. Thus \eqref{eq:err} becomes
		\begin{gather}
		\frac{1}{4\Delta t}\left(\|\xi_{R}^{j,n+1}\|^{2}+\|2\xi_{R}^{j,n+1}-\xi_{R}^{j,n}\|^{2}\right)-\frac{1}{4\Delta t}\left(\|\xi
		_{R}^{j,n}\|^{2}+\|2\xi
		_{R}^{j,n}-\xi
		_{R}^{j,n-1}\|^{2}\right)\nonumber\\
		+\frac{1}{4\Delta t}\|\xi_{R}^{j,n+1}-2\xi_{R}^{j,n}-\xi_{R}^{j,n-1}\|^{2}+\nu
		\|\nabla\xi_{R}^{j,n+1}\|^{2}\nonumber\\
		=-b^{*}(u^{j,n+1},u^{j,n+1},\xi_{R}^{j,n+1})+b^{*}(2u_{R}^{j,n}-u_{R}^{j,n-1}
		,u_{R}^{j,n+1},\xi_{R}^{j,n+1})\label{eq:err1}\\
		-b^{*}(2u_R^{j,n}-u_R^{j,n-1}-<u_R>^n,u_{R}^{j,n+1}-2u_{R}^{j,n}+u_{R}^{j,n-1},\xi_{R}^{j,n+1})\nonumber\\
		+(p^{j,n+1},\nabla\cdot\xi_{R}^{j,n+1})-\nu
		(\nabla\eta^{j,n+1},\nabla\xi_{R}^{j,n+1})+Intp(u^{j,n+1};\xi_{R}
		^{j,n+1}).\nonumber
		\end{gather}
		We rewrite the first two nonlinear terms as
		\begin{align*}\label{eq:nonlinear}
		&-b^{*}(u^{j,n+1},u^{j,n+1},\xi_{R}^{j,n+1})+b^{*}(2u_{R}^{j,n}-u_{R}^{j,n-1}
		,u_{R}^{j,n+1},\xi_{R}^{j,n+1})\nonumber\\
		&=-b^{*}(2e^{j,n}-e^{j,n-1},u^{j,n+1},\xi_{R}^{j,n+1})
		-b^{*}(2u_R^{j,n}-u_R^{j,n-1}
		,e^{j,n+1},\xi_{R}^{j,n+1})\nonumber\\
		&\qquad -b^{*}(u^{j,n+1}-2u^{j,n}+u^{j,n-1}
		,u^{j,n+1},\xi_{R}^{j,n+1})\\
		&=-b^{*}(2\eta^{j,n}-\eta^{j,n-1},u^{j,n+1},\xi_{R}^{j,n+1})-b^{*}(2\xi_R^{j,n}-\xi_R^{j,n-1},u^{j,n+1},\xi_{R}^{j,n+1})\nonumber\\
		&\qquad-b^{*}(2u_R^{j,n}-u_R^{j,n-1}
		,\eta^{j,n+1},\xi_{R}^{j,n+1})-b^{*}(u^{j,n+1}-2u^{j,n}+u^{j,n-1}
		,u^{j,n+1},\xi_{R}^{j,n+1}).\nonumber
		\end{align*}
		With the assumption that $u^j \in L^{\infty}(0,T; H^1(\Omega))$, we estimate the nonlinear terms as follows
		\begin{equation*}
		\begin{aligned}
		&b^{*}(u^{j,n+1}-2u^{j,n}+u^{j,n-1},u^{j,n+1},\xi_{R}^{j,n+1})\\ 
		&\leq C\|\nabla
		(u^{j,n+1}-2u^{j,n}+u^{j,n-1})\|||\nabla u^{j,n+1}\|||\nabla\xi_{R}
		^{j,n+1}\|\\
		&\leq\frac{\nu}{64}\|\nabla\xi_{R}^{j,n+1}\|^{2}+C\nu^{-1}\|\nabla(u
		^{j,n+1}-2u^{j,n}+u^{j,n-1})\|^{2}\\
		&\leq\frac{\nu}{64}\|\nabla\xi_{R}^{j,n+1}\|^{2}+\frac{C\Delta t^3}{\nu}
		(\int_{t^{n-1}}^{t^{n+1}}\| \nabla u_{tt}^j \|^{2} dt)
		\end{aligned}
		\end{equation*}
		and
		\begin{equation*}
		\begin{aligned}
		b^{*}(2&\eta^{j,n}-\eta^{j,n-1}, u^{j,n+1}, \xi_{R}^{j,n+1}) \\
		&\leq C\|\nabla (2\eta^{j,n}-\eta^{j,n-1})
		\|||\nabla u^{j,n+1}\|||\nabla\xi_{R}^{j,n+1}\|\\
		&\leq\frac{\nu}{64}\|\nabla\xi_{R}^{j,n+1}\|^{2}+C\nu^{-1}(\|\nabla\eta^{j,n}
		\|^{2}\|+\|\nabla\eta^{j,n-1}
		\|^{2}) .
		\end{aligned}
		\end{equation*}
		Using Young's inequality, inequality \eqref{In2}, and the result \eqref{Stability} from the stability analysis, i.e., $\Vert u_R^{j,n}\Vert^2\leq C$, we have
		\begin{equation*}
		\begin{aligned}
		b^{*}(2u_R^{j,n}-&u_R^{j,n-1},\eta^{j,n+1},\xi_{R}^{j,n+1}) \\
		&\leq C\|\nabla (2u_R
		^{j,n+1}-u_R^{j,n})\|^{1/2}\|2u_R
		^{j,n+1}-u_R^{j,n}\|^{1/2}\|\nabla\eta^{j,n+1}\|||\nabla\xi_{R}^{j,n+1}\|\\
		&\leq\frac{\nu}{64}\|\nabla\xi_{R}^{j,n+1}\|^{2}+C\nu^{-1}\|\nabla (2 u_R
		^{j,n}-u_R^{j,n-1})\|||\nabla\eta^{j,n+1}\|^{2}\text{ ,}
		\end{aligned}
		\end{equation*}
		Using inequality \eqref{In2}, Young's inequality, and $u^j \in L^{\infty}(0,T; H^1(\Omega))$, we have
		\begin{equation*}
		\begin{aligned}
		-2b^{*}(\xi^{j,n}_{R}, &u^{j,n+1}, \xi_{R}^{j,n+1}) \\
		&\leq C\| \nabla\xi
		^{j,n}_{R} \|^{1/2} \| \xi^{j,n}_{R}\| ^{1/2}\|\nabla
		u^{j,n+1}\|||\nabla\xi_{R}^{j,n+1}\|\\
		&\leq C\| \nabla\xi^{j,n}_{R} \|^{1/2} \| \xi^{j,n}_{R}\| 
		^{1/2}\|\nabla\xi_{R}^{j,n+1}\|\\
		&\leq C\left(\epsilon\|\nabla\xi_{R}^{j,n+1}\|^{2}+\frac{1}{\epsilon}\|\nabla\xi
		^{j,n}_{R} \|||\xi^{j,n}_{R} \|\right)\\
		&\leq C\left(\epsilon\|\nabla\xi_{R}^{j,n+1}\|^{2}+\frac{1}{\epsilon}\left(\delta
		\|\nabla\xi^{j,n}_{R} \|^{2}+\frac{1}{\delta}\|\xi^{j,n}_{R} \|\right)\right)\\
		&\leq \left(\frac{\nu}{64} \|\nabla\xi_{R}^{j,n+1}\|^{2}+\frac{\nu}{16} \|\nabla
		\xi^{j,n}_{R} \|^{2}\right)+\frac{C}{\nu^{3}}\|\xi^{j,n}_{R} \|^{2} ,
		\end{aligned}
		\end{equation*}
		Similarly,
		\begin{equation*}
		\begin{aligned}
		b^{*}(\xi^{j,n-1}_{R}, &u^{j,n+1}, \xi_{R}^{j,n+1}) \\
		&\leq C\| \nabla\xi
		^{j,n-1}_{R} \|^{1/2} \| \xi^{j,n-1}_{R}\| ^{1/2}\|\nabla
		u^{j,n+1}\|||\nabla\xi_{R}^{j,n+1}\|\\
		&\leq C\| \nabla\xi^{j,n-1}_{R} \|^{1/2} \| \xi^{j,n-1}_{R}\| 
		^{1/2}\|\nabla\xi_{R}^{j,n+1}\|\\
		&\leq C\left(\epsilon\|\nabla\xi_{R}^{j,n+1}\|^{2}+\frac{1}{\epsilon}\|\nabla\xi
		^{j,n-1}_{R} \|||\xi^{j,n-1}_{R} \|\right)\\
		&\leq C\left(\epsilon\|\nabla\xi_{R}^{j,n+1}\|^{2}+\frac{1}{\epsilon}\left(\delta
		\|\nabla\xi^{j,n-1}_{R} \|^{2}+\frac{1}{\delta}\|\xi^{j,n-1}_{R} \|\right)\right)\\
		&\leq \left(\frac{\nu}{64} \|\nabla\xi_{R}^{j,n+1}\|^{2}+\frac{\nu}{16} \|\nabla
		\xi^{j,n-1}_{R} \|^{2}\right)+\frac{C}{\nu^{3}}\|\xi^{j,n-1}_{R} \|^{2} .
		\end{aligned}
		\end{equation*}
		Next, we rewrite the third nonlinear term on the right-hand side of \eqref{eq:err1} as
		\begin{align}
		&b^{*}(2u_R^{j,n}-u_R^{j,n-1}-<u_R>^n, u_R^{j,n+1}-2u_R^{j,n}+u_R^{j,n-1}, \xi_R^{j,n+1})\\
		&= - b^{*}(2u_R^{j,n}-u_R^{j,n-1}-<u_R>^n, e^{j,n+1}-2e^{j,n}+e^{j,n-1}, \xi_R^{j,n+1}) \nonumber\\
		&\quad+b^{*}(2u_R^{j,n}-u_R^{j,n-1}-<u_R>^n, u^{j,n+1}-2u^{j,n}+u^{j,n-1}, \xi_R^{j,n+1}) \nonumber\\
		&= - b^{*}(2u_R^{j,n}-u_R^{j,n-1}-<u_R>^n, \eta^{j,n+1}-2\eta^{j,n}+\eta^{j,n-1}, \xi_R^{j,n+1}) \nonumber\\
		&\quad+b^{*}(2u_R^{j,n}-u_R^{j,n-1}-<u_R>^n, 2\xi_R^{j,n}-\xi_R^{j,n-1}, \xi_R^{j,n+1}) \nonumber\\
		&\quad+b^{*}(2u_R^{j,n}-u_R^{j,n-1}-<u_R>^n, u^{j,n+1}-2u^{j,n}+u^{j,n-1}, \xi_R^{j,n+1}) .\nonumber
		\end{align}
		Then, we have the following estimates on the above terms:
		\begin{align*}
		&-b^{*}(2u_{R}^{j,n}-u_R^{j,n-1}-<u_R>^n,\eta^{j,n+1}-2\eta^{j,n}+\eta^{j,n-1},\xi_{R}^{j,n+1})\\ 
		&\leq C\|\nabla (2u_{R}^{j,n}-u_R^{j,n-1}-<u_R>^n)\|||\nabla \left(\eta^{j,n+1}-2\eta^{j,n}+\eta^{j,n-1}\right)\|||\nabla\xi_{R}^{j,n+1}\|\nonumber\\
		&\leq C\nu^{-1}\|\nabla (2u_{R}^{j,n}-u_R^{j,n-1}-<u_R>^n)\|^{2} \|\nabla \left(\eta^{j,n+1}-2\eta^{j,n}+\eta^{j,n-1}\right)\Vert^2\nonumber\\
		&\qquad\qquad +\frac{\nu}{64}\|\nabla\xi_{R}^{j,n+1}\|^{2}.\nonumber
		\end{align*}
		By skew symmetry, inequality \eqref{In1} and the inverse inequality, we have
		\begin{align*}
		&b^{*}(2u_{R}^{j,n}-u_R^{j,n-1}-<u_R>^n,2\xi_R^{j,n}-\xi_R^{j,n-1},\xi_{R}^{j,n+1})\\
		&= b^{*}(2u_{R}^{j,n}-u_R^{j,n-1}-<u_R>^n, \xi_{R}^{j,n+1}, \xi_R^{j,n+1}-2\xi_R^{j,n}+\xi_R^{j,n-1})\nonumber\\
		&\leq \Vert \nabla (2u_{R}^{j,n}-u_R^{j,n-1}-<u_R>^n)\Vert\Vert \nabla \xi_{R}^{j,n+1}\Vert \|\hspace{-1pt}|  {\mathbb S}_R \|\hspace{-1pt}| _2^{1/4}\Vert \xi_R^{j,n+1}-2\xi_R^{j,n}+\xi_R^{j,n-1}
		\Vert\nonumber\\ 
		&\leq \frac{1}{8\Delta t}\Vert \xi_R^{j,n+1}-2\xi_R^{j,n}+\xi_R^{j,n-1}
		\Vert^2\nonumber\\
		&\qquad+C\Delta t\|\hspace{-1pt}|  {\mathbb S}_R \|\hspace{-1pt}| _2^{1/2}\Vert \nabla (2u_{R}^{j,n}-u_R^{j,n-1}-<u_R>^n)\Vert^2\Vert \nabla \xi_{R}^{j,n+1}\Vert^2.\nonumber
		\end{align*}
		For the last nonlinear term we have
		\begin{align*}
		&b^{*}(2u_{R}^{j,n}-u_R^{j,n-1}-<u_R>^n,u^{j,n+1}-2u^{j,n}+u^{j,n-1},\xi_{R}^{j,n+1})\\
		&\leq C\|\nabla
		(2u_{R}^{j,n}-u_R^{j,n-1}-<u_R>^n)\|||\nabla \left( u^{j,n+1}-2u^{j,n}+u^{j,n-1}\right)\|||\nabla\xi_{R}^{j,n+1}
		\|\nonumber\\
		&\leq\frac{\nu}{64}\|\nabla\xi_{R}^{j,n+1}\|^{2}+\frac{C \Delta t^3}{\nu}\|\nabla
		(2u_{R}^{j,n}-u_R^{j,n-1}-<u_R>^n)\|^{2}\left(\int_{t_{n-1}}^{t_{n+1}}\| \nabla u_{tt}^j\|^{2} \text{
		}dt\right)\text{ .}\nonumber
		\end{align*}
		Next, consider the pressure term. Since $\xi_{R}^{j,n+1}\in X_{R}\subset V_h$
		we have for $q_{h}^{n+1}\in Q_h$
		\begin{equation*}
		\begin{aligned}
		(p^{j,n+1},\nabla\cdot\xi_{R}^{j,n+1})&=(p^{j,n+1}-q_{h}^{n+1},
		\nabla\cdot\xi_{R}^{j,n+1})\\
		&\leq\|p^{j,n+1}-q_{h}^{n+1}\|||\nabla\cdot\xi_{R}^{j,n+1}\|\\
		&\leq\frac{\nu}{64}\|\nabla\xi_{R}^{j,n+1}\|^{2}+C \nu^{-1}\|p
		^{j,n+1}-q_{h}^{n+1}\|^{2} \text{ .}
		\end{aligned}
		\end{equation*}
		The viscous term is bounded as
		\begin{equation*}
		\begin{aligned}
		\nu(\nabla\eta^{j,n+1},\nabla\xi_{R}^{j,n+1}) &\leq\nu\|\nabla\eta
		^{j,n+1}\| \|\nabla\xi_{R}^{j,n+1}\|\\
		&\leq C\nu\|\nabla\eta^{j,n+1}\|^{2}+ \frac{\nu}{64}\|\nabla\xi_{R}
		^{j,n+1}\|^{2} \text{ .}
		\end{aligned}
		\end{equation*}
		Finally,
		\begin{align*}
		Intp(u^{j,n+1};\xi_{R}^{j,n+1})&=\left(\frac{3u^{j,n+1}-4u^{j,n}
			+u^{j,n-1}}{2\Delta
			t}-u_{t}^j(t_{n+1}),\xi_{R}^{j,n+1}\right)\\
		&\leq C\|\frac{3u^{j,n+1}-4u^{j,n}+u^{j,n-1}}{2\Delta t}-u_{t}^j(t_{n+1})\| \|\nabla
		\xi_{R}^{j,n+1}\|\nonumber\\
		&\leq\frac{\nu}{64}\|\nabla\xi_{R}^{j,n+1}\|^{2}+\frac{C}{\nu}\|\frac
		{3u^{j,n+1}-4u^{j,n}+u^{j,n-1}}{2\Delta t}-u_{t}^j(t_{n+1})\|^{2}\nonumber\\
		&\leq\frac{\nu}{64}\|\nabla\xi_{R}^{j,n+1}\|^{2}+\frac{C\Delta t^3}{\nu}
		\int_{t_{n}}^{t_{n+1}}\|u_{ttt}^j\|^{2} dt \text{ .}\nonumber
		\end{align*}
		Combining, we now have the inequality
		\begin{align}\label{eq:err2}
		&\frac{1}{4\Delta t}\left(\|\xi_{R}^{j,n+1}\|^{2}+\|2\xi_{R}^{j,n+1}-\xi_R^{j,n}\|^{2}\right)-\frac{1}{4\Delta t}\left(\|\xi_{R}^{j,n}\|^{2}+\|2\xi_{R}^{j,n}-\xi_R^{j,n-1}\|^{2}\right)\nonumber\\
		&+\frac{1}{8\Delta t}\| \xi_{R}^{j,n+1}-2\xi_{R}^{j,n}+\xi_R^{j,n-1}\| ^{2}
		)+\frac{\nu}{64}\| \nabla\xi_{R}^{j,n+1}\| ^{2}+\frac{\nu}{16}\left(\| \nabla\xi_{R}^{j,n+1}\| ^{2}-\| \nabla\xi_{R}
		^{j,n}\| ^{2}\right)\nonumber\\
		&+\frac{\nu}{16}\left(\left(\| \nabla\xi_{R}^{j,n+1}\| ^{2}+\| \nabla\xi_{R}
		^{j,n}\| ^{2}\right)-\left(\| \nabla\xi_{R}^{j,n}\| ^{2}+\| \nabla\xi_{R}
		^{j,n-1}\| ^{2}\right)\right)\nonumber\\
		&+\left(\frac{\nu}{4}-C\triangle t \|\hspace{-1pt}|  {\mathbb S}_R \|\hspace{-1pt}| _2^{1/2}\| \nabla (2u_R^{j,n}-u_R^{j,n-1}-<u_R>^n)\| ^{2}\right)\| \nabla
		\xi_{R}^{j,n+1}\| ^{2}\nonumber\\
		&\leq C\nu^{-3}\left( \Vert \xi_R^{j,n}\Vert^2+\Vert \xi_R^{j,n-1}\Vert^2\right) +\frac{C\Delta t^3}{\nu}\left(\int_{t_{n-1}}^{t_{n+1}}\| \nabla u_{tt}^j
		\| ^{2}dt\right) \\
		&+C\nu^{-1}\left(\| \nabla\eta^{j,n}\| ^{2}+\| \nabla\eta^{j,n-1}\| ^{2}\right)+ C\nu^{-1}\| \nabla (2u_R^{j,n}-u_R^{j,n-1})\| \| \nabla\eta^{j,n+1}
		\| ^{2}\nonumber\\
		&+C\nu^{-1}\| \nabla
		(2u_R^{j,n}-u_R^{j,n-1}-<u_R>^n)\| ^{2}\|\nabla \left(\eta^{j,n+1}-2\eta^{j,n}+\eta^{j,n-1}\right)\Vert^2\nonumber\\
		&+\frac{C\Delta t^3}{\nu}\| \nabla (2u_R^{j,n}-u_R^{j,n-1}-<u_R>^n)\| ^{2}\left(\int_{t_{n-1}}^{t_{n+1}
		}\| \nabla u_{tt}^j\| ^{2}dt\right)\nonumber\\
		&+C\nu^{-1}\|  p^{j,n+1}-q_{h}^{n+1}
		\| ^{2}+C\nu\| \nabla\eta^{j,n+1}\| ^{2}+\frac{C\Delta t^3}{\nu}\left(\int_{t_{n-1}
		}^{t_{n+1}}\|  u_{ttt}^j\| ^{2}dt\right)\text{ .}\nonumber
		\end{align}
		By the time-step condition \eqref{cond:err}, we have $\frac{\nu}{4}-C\triangle t \|\hspace{-1pt}|  {\mathbb S}_R \|\hspace{-1pt}| _2^{1/2}\| \nabla
		(2u_{R}^{ j, n}-u_R^{j,n-1}-<u_R>^n)\| ^{2}>0$ and thus can be removed from the left hand side of the inequality. Taking the sum of (\ref{eq:err2}) from $n=1$ to
		$n=N-1$ and multiplying through by $2\Delta t$, we obtain
		\begin{align*}
		&\frac{1}{2}\|\xi_{R}^{j,N}\|^{2}+\frac{1}{2}\|2\xi_{R}^{j,N}-\xi_R^{j,N-1}\|^{2}+\frac{1}{4}\sum_{n=1}^{N-1}\| \xi_{R}^{j,n+1}-2\xi_{R}^{j,n}+\xi_{R}^{j,n-1}
		\| ^{2}\\
		&+\frac{\nu \Delta t}{32}\sum_{n=1}^{N-1}\Vert \nabla\xi_R^{j,n+1}\Vert^2+\frac{\nu\Delta t}{4}\|\nabla \xi_{R}
		^{j,N}\|^{2}+\frac{\nu\Delta t}{8}\|\nabla \xi_{R}
		^{j,N-1}\|^{2}\nonumber\\
		&\leq\frac{1}{2}\|\xi_{R}^{j,1}\|^{2}+\frac{1}{2}\|2\xi_{R}^{j,1}-\xi_R^{j,0}\|^{2}+\frac{\nu\Delta t}{4}\|\nabla\xi
		_{R}^{j,1}\|^{2}+\frac{\nu\Delta t}{8}\|\nabla\xi
		_{R}^{j,0}\|^{2}\nonumber\\
		&+\Delta t\sum_{n=1}^{N-1}\frac{C}{\nu^{3}}\| \xi_{R}
		^{j,n}\| ^{2}
		+\Delta t\sum_{n=1}^{N-1}\Bigg\{  \frac{C\Delta t^3}{\nu}\left(\int_{t_{n-1}}^{t_{n+1}}\| \nabla u_{tt}^j
		\| ^{2}dt\right) \\
		&+C\nu^{-1}\left(\| \nabla\eta^{j,n}\| ^{2}+\| \nabla\eta^{j,n-1}\| ^{2}\right)+ C\nu^{-1}\| \nabla (2u_R^{j,n}-u_R^{j,n-1})\| \| \nabla\eta^{j,n+1}
		\| ^{2}\nonumber\\
		&+C\nu^{-1}\| \nabla
		(2u_R^{j,n}-u_R^{j,n-1}-<u_R>^n)\| ^{2}\|\nabla \left(\eta^{j,n+1}-2\eta^{j,n}+\eta^{j,n-1}\right)\Vert^2\nonumber\\
		&+\frac{C\Delta t^3}{\nu}\| \nabla (2u_R^{j,n}-u_R^{j,n-1}-<u_R>^n)\| ^{2}\left(\int_{t_{n-1}}^{t_{n+1}
		}\| \nabla u_{tt}^j\| ^{2}dt\right)\nonumber\\
		&+C\nu^{-1}\|  p^{j,n+1}-q_{h}^{n+1}
		\| ^{2}+C\nu\| \nabla\eta^{j,n+1}\| ^{2}+\frac{C\Delta t^3}{\nu}\left(\int_{t_{n-1}
		}^{t_{n+1}}\|  u_{ttt}^j\| ^{2}dt\right)\Bigg\}\text{ .}\nonumber
		\end{align*}
		Using the stability result, i.e. $\Delta t \sum_{n=1}^{N-1} \nu \Vert \nabla u_R^{j,n+1}\Vert^2 \leq C$ and Assumption \eqref{assumption1}, we have
		\begin{align*}
		& C\nu^{-1} \Delta t \sum_{n=1}^{N-1}\Vert \nabla (2 u_R^{j,n}-u_R^{j,n-1})\Vert \Vert \nabla \eta^{j,n+1}\Vert^2\\
		&\qquad \leq C \nu^{-2} \Bigg( \inf_{{j\in\{1,\ldots,J_S\}}}\frac{2}{N_S+1} \sum_{m=0}^{N_S} \left(\|  \nabla (u^{m}-u_S^{j,m})\| ^2+\|\hspace{-1pt}|  {\mathbb S}_R \|\hspace{-1pt}| _2 \|   u^{m}-u_S^{j,m}\| ^2  \right)
		\nonumber\\
		&\qquad\quad
		+(C+h^2 \|\hspace{-1pt}|  {\mathbb S}_R \|\hspace{-1pt}| _2 ) h^{2s} + (C+\|\hspace{-1pt}|  {\mathbb S}_R \|\hspace{-1pt}| _2 )\triangle t^4  
		+ 2J_S\sum_{i=R+1}^{J_S(N_S+1)} \|  \nabla \varphi_i\| ^2\lambda_i\Bigg ).\nonumber
		\end{align*}
		Because $u_R^{j,0}=\sum_{i=1}^R (u^{j,0}, {\varphi}_i)\varphi_i$, we have $\|  \xi_{R}^{j,0}\| ^2=0$ and  $\|  \nabla \xi_{R}^{j,0}\| ^2=0$. Now applying Lemma \ref{lm:Projerr} gives
		\begin{align*}
		&\frac{1}{2}\|\xi_{R}^{j,N}\|^{2}+\frac{1}{2}\|2\xi_{R}^{j,N}-\xi_R^{j,N-1}\|^{2}+\frac{1}{4}\sum_{n=1}^{N-1}\| \xi_{R}^{j,n+1}-2\xi_{R}^{j,n}+\xi_{R}^{j,n-1}
		\| ^{2}\\
		&+\frac{\nu \Delta t}{32}\sum_{n=1}^{N-1}\Vert \nabla\xi_R^{j,n+1}\Vert^2+\frac{\nu\Delta t}{4}\|\nabla \xi_{R}
		^{j,N}\|^{2}+\frac{\nu\Delta t}{8}\|\nabla \xi_{R}
		^{j,N-1}\|^{2}\nonumber\\
		&\leq \Delta t\sum_{n=0}^{N-1}\frac{C}{\nu^{2}}\| \xi_{R}
		^{j,n}\| ^{2}+\frac{C\Delta t^{4}}{\nu}\| |\nabla u_{tt}^j||| _{2,0}^{2}+\Bigg(C\nu^{-2}+C N_S\Delta t\nu^{-1}\nonumber\\
		&\quad+CN_S\|\hspace{-1pt}|  {\mathbb S}_R \|\hspace{-1pt}| _2^{-1/2}
		+CN_S\nu \triangle t \Bigg) \cdot \Bigg( \inf_{{j\in\{1,\ldots,J_S\}}}\frac{1}{N_S} \sum_{m=1}^{N_S} (\|  \nabla (u^{m}-u_S^{j,m})\| ^2\nonumber\\
		&\quad+\|\hspace{-1pt}|  {\mathbb S}_R \|\hspace{-1pt}| _2 \|   u^{m}-u_S^{j,m}\| ^2  )
		+(C+h^2 \|\hspace{-1pt}|  {\mathbb S}_R \|\hspace{-1pt}| _2 ) h^{2s} + (C+\|\hspace{-1pt}|  {\mathbb S}_R \|\hspace{-1pt}| _2 )\triangle t^4  \nonumber\\
		&\quad+ J_S\sum_{i=R+1}^{J_SN_S} \|  \nabla \varphi_i\| ^2\lambda_i  \Bigg )
		+C  \Delta t^3 \|\hspace{-1pt}|  {\mathbb S}_R \|\hspace{-1pt}| _2^{-1/2}\| |\nabla
		u_{tt}^j||| _{2,0}^{2}\nonumber\\
		&\quad+C\frac{h^{2s}}{\nu}\| |p^j||| _{2,s}^{2}
		+\frac{C{\Delta t}^{4}}{\nu
		}\| |u_{ttt}^j||| _{2,0}^{2}.\nonumber
		\end{align*}
		We use the discrete Gronwall
		inequality \cite[p. 176]{GR79} to obtain
		\begin{align}
		&\frac{1}{2}\|\xi_{R}^{j,N}\|^{2}+\frac{1}{2}\|2\xi_{R}^{j,N}-\xi_R^{j,N-1}\|^{2}+\frac{1}{4}\sum_{n=1}^{N-1}\| \xi_{R}^{j,n+1}-2\xi_{R}^{j,n}+\xi_{R}^{j,n-1}
		\| ^{2}\nonumber\\
		&+\frac{\nu \Delta t}{32}\sum_{n=1}^{N-1}\Vert \nabla\xi_R^{j,n+1}\Vert^2+\frac{\nu\Delta t}{4}\|\nabla \xi_{R}
		^{j,N}\|^{2}+\frac{\nu\Delta t}{8}\|\nabla \xi_{R}
		^{j,N-1}\|^{2}\nonumber\\
		&\leq \exp \Big(\frac{CT}{\nu^{2}}\Big)\Bigg\{\frac{C\Delta t^{4}}{\nu}\| |\nabla u_{tt}^j||| _{2,0}^{2}+\Bigg(C\nu^{-2}+C N_S\Delta t\nu^{-1}+CN_S\|\hspace{-1pt}|  {\mathbb S}_R \|\hspace{-1pt}| _2^{-1/2}\label{ineq:errlast1}\\
		&\quad
		+CN_S\nu \triangle t \Bigg) \cdot\Bigg( \inf_{{j\in\{1,\ldots,J_S\}}}\frac{1}{N_S} \sum_{m=1}^{N_S} (\|  \nabla (u^{m}-u_S^{j,m})\| ^2+\|\hspace{-1pt}|  {\mathbb S}_R \|\hspace{-1pt}| _2 \|   u^{m}-u_S^{j,m}\| ^2  )\nonumber\\
		&
		\quad+(C+h^2 \|\hspace{-1pt}|  {\mathbb S}_R \|\hspace{-1pt}| _2 ) h^{2s} + (C+\|\hspace{-1pt}|  {\mathbb S}_R \|\hspace{-1pt}| _2 )\triangle t^4  + J_S\sum_{i=R+1}^{J_SN_S} \|  \nabla \varphi_i\| ^2\lambda_i   \Bigg )\nonumber\\
		&
		\quad+C  \Delta t^3 \|\hspace{-1pt}|  {\mathbb S}_R \|\hspace{-1pt}| _2^{-1/2}\| |\nabla
		u_{tt}^j||| _{2,0}^{2}+C\frac{h^{2s}}{\nu}\| |p^j||| _{2,s}^{2}
		+\frac{C{\Delta t}^{4}}{\nu
		}\| |u_{ttt}^j||| _{2,0}^{2}\Bigg\}\text{ .}\nonumber
		\end{align}
		Recall that $e^{j,n}=\eta^{j,n}+\xi_{R}^{j,n}$. To simplify formulas, we drop the second, third, fifth and sixth term on the left hand side of \eqref{ineq:errlast1}. Then, by the triangle
		inequality and Lemma \ref{lm:Projerr}, absorbing constants, we have 
		\begin{align*}
		\frac{1}{2}&\|  e^{j,N}\| ^{2}+\frac{\Delta t}{32}\sum_{n=1}^{N-1}\nu\| \nabla e^{j,n+1}\| ^{2}\\
		&\leq C\Bigg (\Delta t^4 + \|\hspace{-1pt}|  {\mathbb S}_R \|\hspace{-1pt}| _2^{-1/2} \Delta t^3+ \|\hspace{-1pt}|  {\mathbb S}_R \|\hspace{-1pt}| _2 \Delta t^4+ \|\hspace{-1pt}|  {\mathbb S}_R \|\hspace{-1pt}| _2 h^{2s+2}\nonumber\\
		&\quad+  \|\hspace{-1pt}|  {\mathbb S}_R \|\hspace{-1pt}| _2^{-1/2} h^{2s}\Delta t^{-1}+ \|\hspace{-1pt}|  {\mathbb S}_R \|\hspace{-1pt}| _2^{1/2} h^{2s+2}\Delta t^{-1}
		+  \|\hspace{-1pt}|  {\mathbb S}_R \|\hspace{-1pt}| _2^{1/2} \Delta t^3+h^{2s}\nonumber\\
		& \quad+(1+N_S\ \|\hspace{-1pt}|  {\mathbb S}_R \|\hspace{-1pt}| _2^{-1/2}) \Big(\inf_{{j\in\{1,\ldots,J_S\}}}\frac{1}{N_S} \sum_{m=1}^{N_S} (\|  \nabla (u^{m}-u_S^{j,m})\| ^2\nonumber\\
		&\quad+\|\hspace{-1pt}|  {\mathbb S}_R \|\hspace{-1pt}| _2 \|   u^{m}-u_S^{j,m}\| ^2  )+ J_S\sum_{i=R+1}^{J_SN_S} \|  \nabla \varphi_i\| ^2\lambda_i  \Big)\Bigg).\nonumber
		\end{align*}
		This completes the proof.
	\end{proof}
}

\section{Numerical experiments}
\label{numex}
In this section, we examine the efficacy of the \emph{EnB-POD} method via the numerical simulation of
a flow between two offset circles. We focus on testing the accuracy of our methods compared to \emph{EnB-full}. 

{\color{red}The computational cost of the \emph{EnB-POD} algorithm can be split into two phases: an expensive offline phase and a cheaper online phase. In the offline phase, we assemble the snapshot matrix $\mathbb{A}$ and, as outlined in Section \ref{POD_sec}, use this system to generate the reduced basis. Then, in the online phase we use the reduced basis to assemble \eqref{EnB-POD-Weak} which we then solve. The major computational gain of \emph{EnB-POD} comes from the fact that it should be much cheaper to solve \eqref{EnB-POD-Weak} than the original system \eqref{EnB}. For a more detailed discussion about the computational efficiency of the ensemble-POD algorithm, see  \cite[Section 6.1]{GJS17}. The method we develop here has the advantage of higher-order accuracy, which allows larger time steps than that of the first-order algorithm studied in \cite{GJS17} which further reduces the computational cost which especially useful in long-time simulations.}

We consider the two-dimensional flow between two offset circles similar to the one investigated in \cite{GJS17}. The domain is a disk with a smaller off-center disc inside. Let $r_{1}=1$, $r_{2}=0.1$, $c_{1}=1/2$, and $c_{2}=0$; then, the domain is given by
\[
\Omega=\{(x,y):x^{2}+y^{2}\leq r_{1}^{2} \text{ and } (x-c_{1})^{2}%
+(y-c_{2})^{2}\geq r_{2}^{2}\}.
\]
For our test problems, we generate perturbed initial conditions by solving a steady Stokes problem with perturbed body forces given by 
\[
f_{\epsilon}(x,y,t)=f(x,y,t)+\epsilon\big(\sin(3\pi x)\sin(3\pi y),\cos(3\pi
x)\cos(3\pi y)\big)^{T}.
\]
The deterministic flow is driven by the counterclockwise rotational body force
\[
f(x,y,t)=\big(-4y(1-x^{2}-y^{2})\,,\,4x(1-x^{2}-y^{2})\big)^{T}.
\]
No-slip, no-penetration boundary conditions are imposed on both circles. All computations are done using the FEniCS software suite \cite{LNW12}. 
Different perturbations defined by varying $\epsilon$ are applied. We discretize in space via the $P^2$-$P^1$ Taylor-Hood element pair. The mesh utilized is generated using the FEniCS built-in \textbf{mshr} package resulting in 16,457 total degrees of freedom; see Figure \ref{meshCinC}. We set the viscosity coefficient $\nu = \frac{1}{50}$.

\begin{figure}[h!]
	\centering
	\includegraphics[width = 6cm]{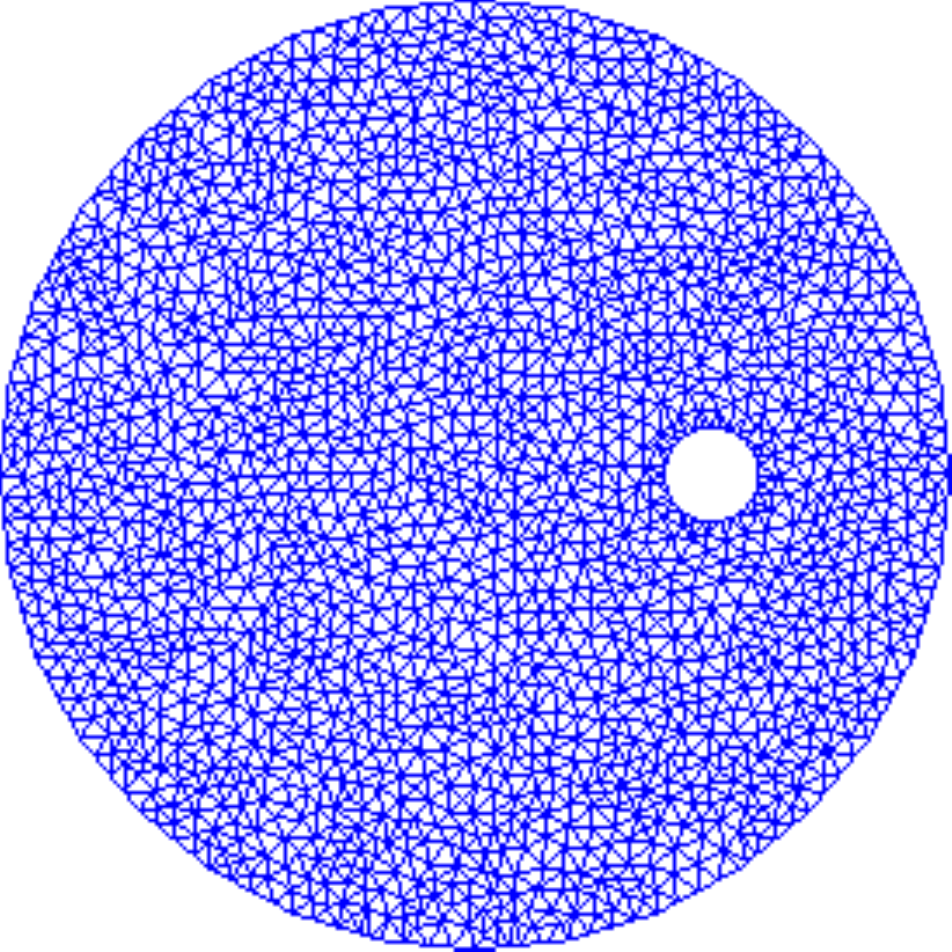} 
	\caption{Mesh for flow between offset circles resulting in 7,405 total degrees of freedom for the Taylor-Hood element pair.}
	\label{meshCinC}
\end{figure}

In order to generate the POD basis, we use two perturbations of the initial conditions corresponding to $\epsilon_{1} = 0.001$ and $\epsilon_{2} = -0.001$. Using a mesh that results in 16,457 total degrees of freedom and a fixed time step $\Delta t = .01$, we run the {\em EnB-full} using both perturbation from $t_{0} = 0$ to $T=5$, taking snapshots every $0.04$ seconds. In Figure \ref{eigvals_ex1_ex2}, we illustrate the decay of the singular values of the snapshot matrix. {\color{red}We note that by subtracting the mean snapshot from all the snapshots in the system $\mathbb{A}$ known as the centering trajectory approach \cite{HLB96} the numerical results may be improved. Specifically we expect the size of the eigenvalues (especially the first) to be smaller. In order to keep the numerical setting as close as possible to the theoretical setting we do not use the centering trajectory approach in our experiments.      }

\begin{figure}[h!]
	\centering
	\includegraphics[width=8cm]{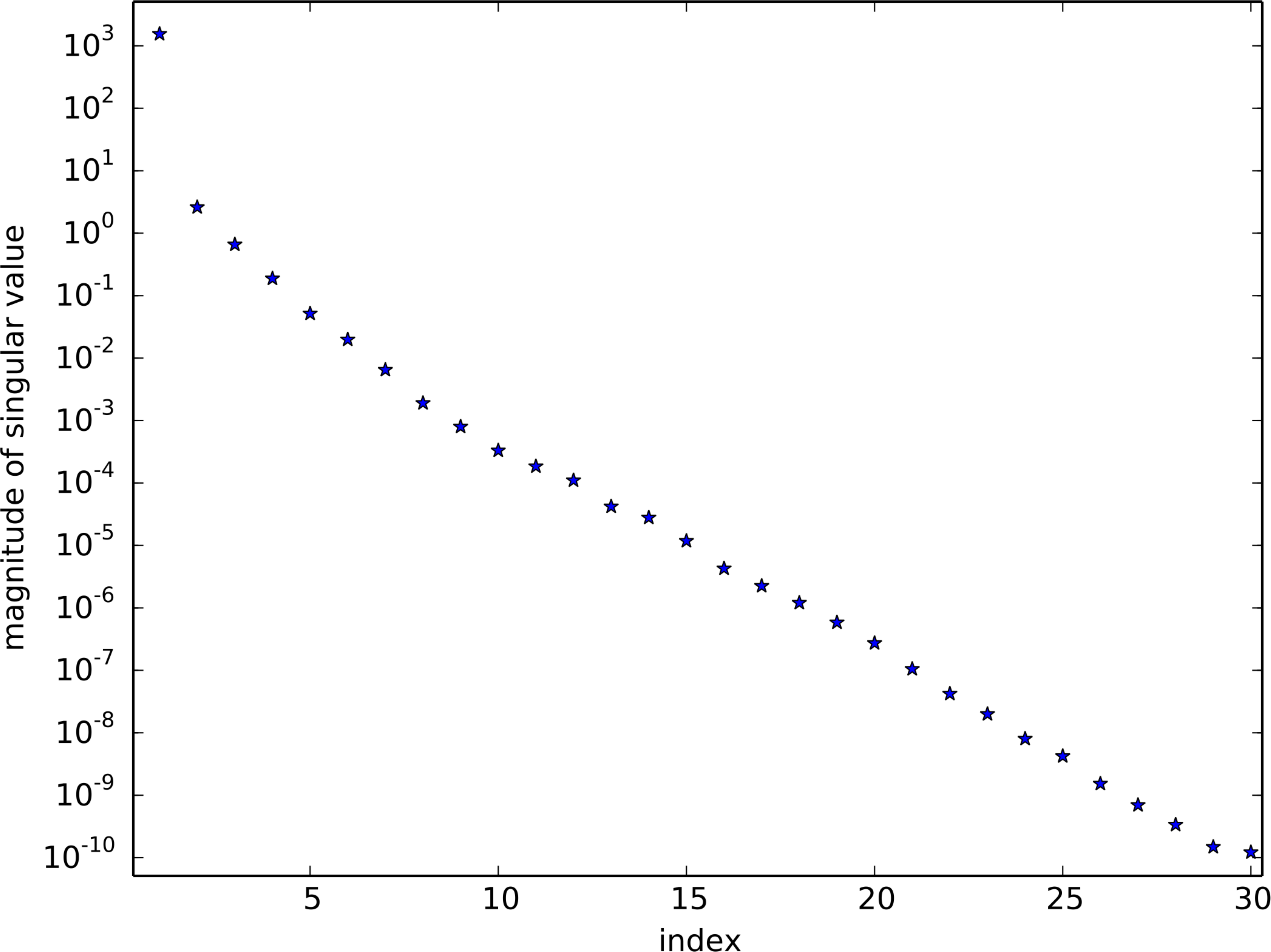} 
	\caption{The 30 largest eigenvalues for the snapshot matrix generated from the flow between two offset circles.}
	\label{eigvals_ex1_ex2}
\end{figure}

\subsection{Example 1}
\label{ex1}
We will illustrate our theoretical error estimates by performing the ``data mining'' test used in  \cite[Example 1]{GJS17}, that is, we compare the flow generated by the  {\em EnB-POD} algorithm with that generated by the {\em EnB-full} algorithm for the same perturbations, mesh, and time steps used for the generation of the POD basis. To demonstrate the accuracy of the  {\em EnB-POD} algorithm, we plot the velocity field of the ensemble average at the final time $T = 5$. In addition we plot, for both methods, the energy $\frac{1}{2}\|u\|^{2}$ and the enstrophy $  \frac{1}{2}\nu\|\nabla \times u\|^{2}$ over the time interval $0 \leq t \leq 5$.   

As can be seen in Figure \ref{ErrDiff1} (left), the difference between the average \emph{EnB-POD} and \emph{EnB-full} velocities method appears to be quite small at  $T = 5$. We can also see in Figure \ref{Energy_ex1/Enstrophy_ex1} that the energy and enstrophy of the \emph{EnB-POD} method matches that of the \emph{EnB-full} method when $6$ POD basis vectors are used in the approximation.  Lastly, it is seen in Table \ref{tabEx1}(a) that the L2 error of the approximation reduces monotonically as the number of POD basis functions used increases. 

\begin{figure}[h!]
	\centering
	\includegraphics[height=4.0cm]{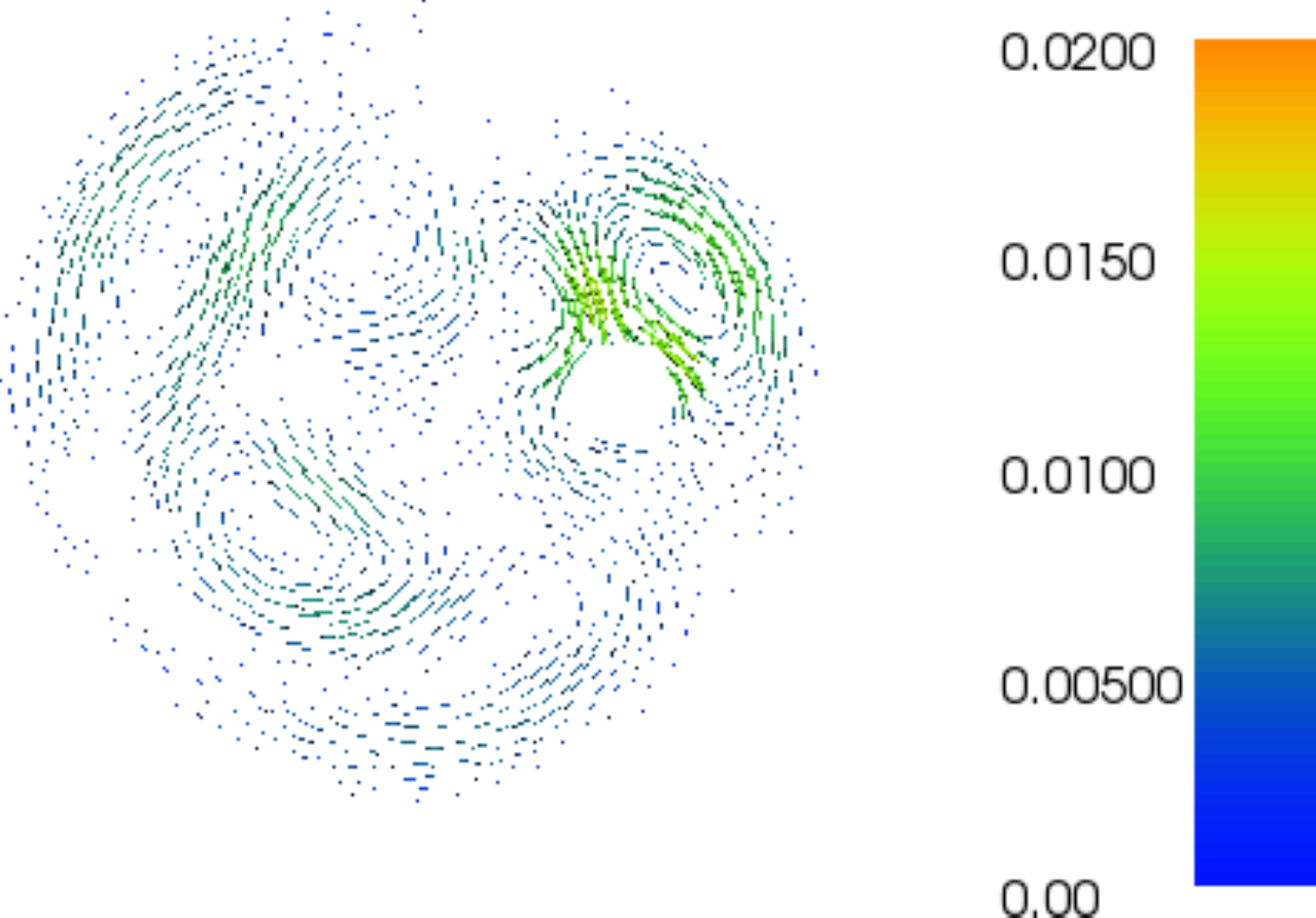}\quad
	\includegraphics[height=4.0cm]{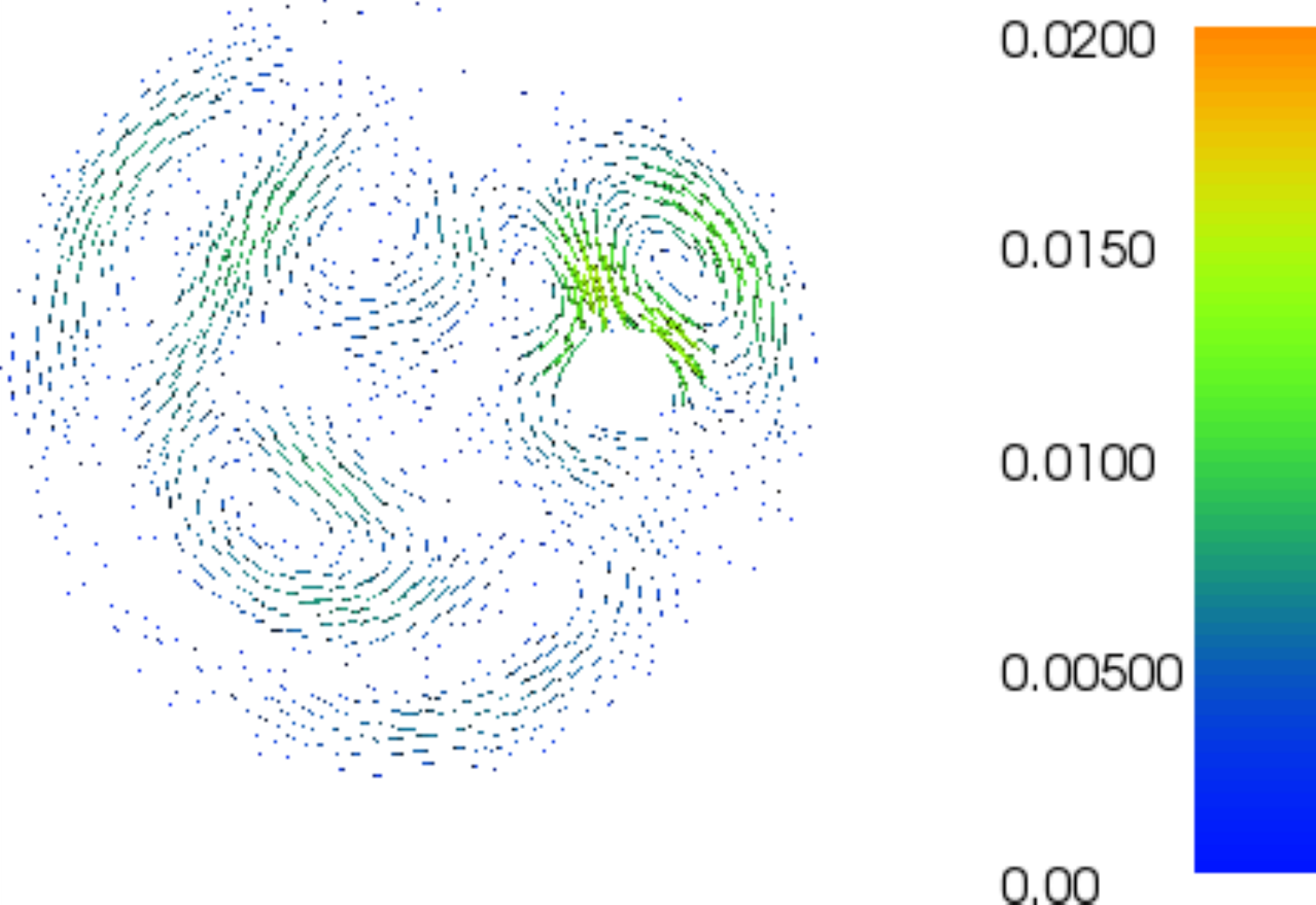} 
	\caption{The difference between velocity field at the final time $T = 5$ of the EnB-full approximation and the En-POD approximation with $6$ reduced basis vectors for example 1 (left) and example 2 (right).}
	\label{ErrDiff1}
\end{figure}

\begin{figure}[h!]
	\centering
	\includegraphics[height=4.0cm]{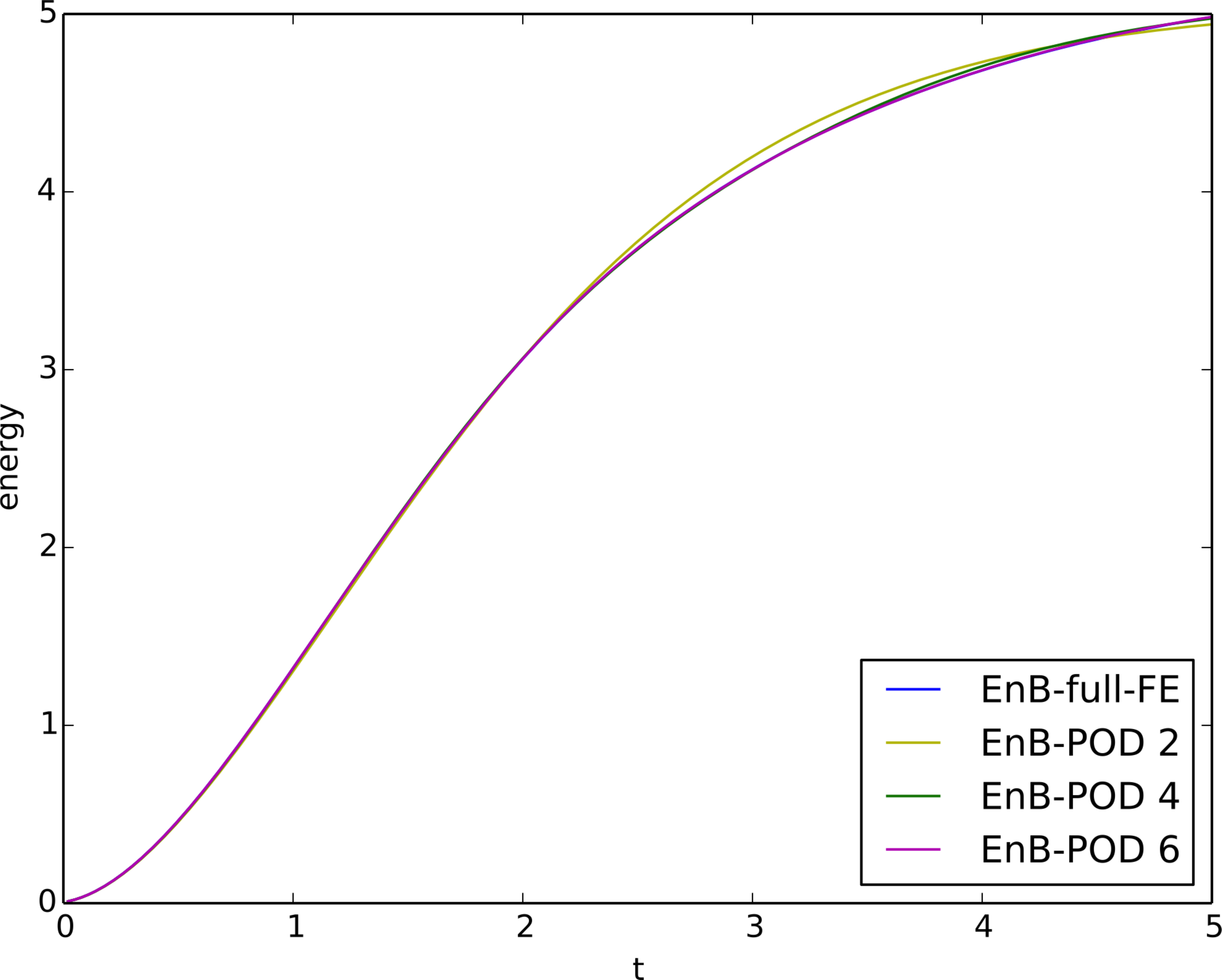}\quad
	\includegraphics[height=4.0cm]{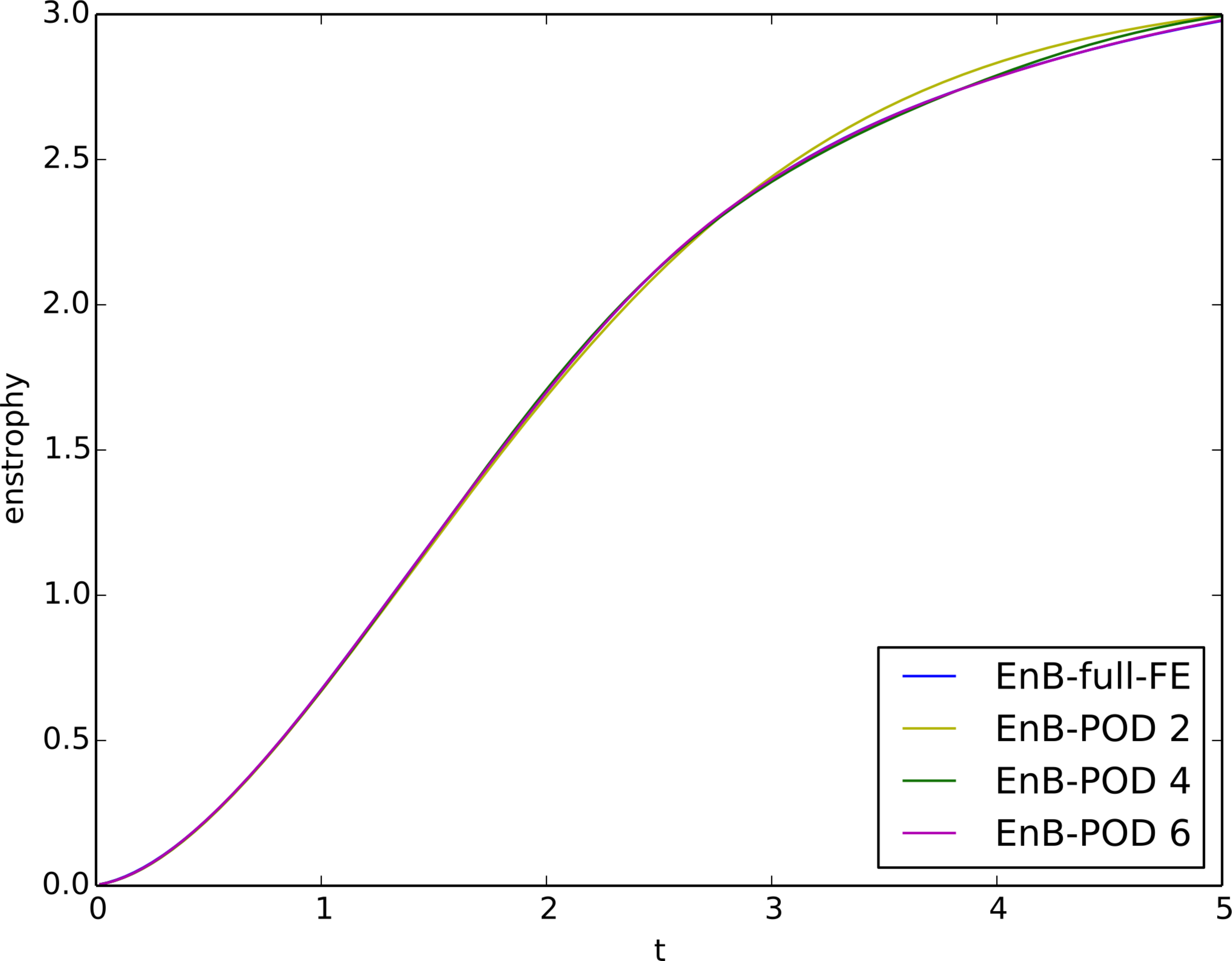}  
	\caption{For $0 \leq t \leq 5$, the energy (left) and enstrophy (right) of the ensemble determined for the EnB-full approximations and for the EnB-POD approximation of several dimensions for example 1.}
	\label{Energy_ex1/Enstrophy_ex1}
\end{figure}

\begin{table}[h!]
	\begin{center}
		\begin{tabular}{|c|c|c|c|c|c|c|c|}
			\multicolumn{2}{c}{\footnotesize(a) \em Example 1} &\multicolumn{1}{c}{}&
			\multicolumn{2}{c}{\footnotesize(b) \em Example 2}
			&\multicolumn{1}{c}{}\\
			\cline{1-2}  \cline{4-5}  
			$R$ & error && $R$ & error   \\ \cline{1-2}  \cline{4-5}   
			2 &  0.035785&& 2 & 0.035869   \\  \cline{1-2}  \cline{4-5}  
			3 &  0.021379&& 3 & 0.021437 \\  \cline{1-2}  \cline{4-5} 
			4 &  0.013802&&  4 & 0.013910    \\  \cline{1-2}  \cline{4-5}  
			5 &  0.009067&&  5 & 0.009073   \\  \cline{1-2}  \cline{4-5} 
			6 & 0.004886&&  6 & 0.004969  \\   \cline{1-2}    \cline{4-5}
		\end{tabular}
		\caption{The $L^{2}$ relative error $||u_{h}^{ave}- u_{R}^{ave}||_{2,0}$ vs. the dimension $R$ of the POD approximation for Examples 1 and 2.}\label{tabEx1}
	\end{center}
\end{table}

\subsection{Example 2}
We next wish to test our reduced basis method in the extrapolatory setting, that is, for values of the perturbation parameter $\epsilon$ different from those used to generate the reduced-order basis. To do so, we examine the problem used in Section \ref{ex1} with the reduced basis for the \emph{EnB-POD} approximation still generated by $\epsilon_{1} = 0.001$ and $\epsilon_{2} = -0.001$, but now we use five perturbations  $\epsilon_{1} =0.2$, $\epsilon_{2} = 0.4$, $\epsilon_{3} =0.6$, $\epsilon_{4} = 0.8$, and  $\epsilon_{5} = 1.0$ in the calculation of the \emph{EnB-full} and \emph{EnB-POD} solutions; all these values of $\epsilon$ are well outside the interval $[-0.001,0.001]$.

The results for the ensemble average are given in Figures \ref{ErrDiff1} (right), \ref{Energy_ex2/Enstrophy_ex2} and Table \ref{tabEx1}(b). We see that the results are identical to the first example except that, understandably, the relative error is slightly larger.

\begin{figure}[h!]
	\centering
	\includegraphics[height=4.0cm]{energy_ex2.pdf}\quad
	\includegraphics[height=4.0cm]{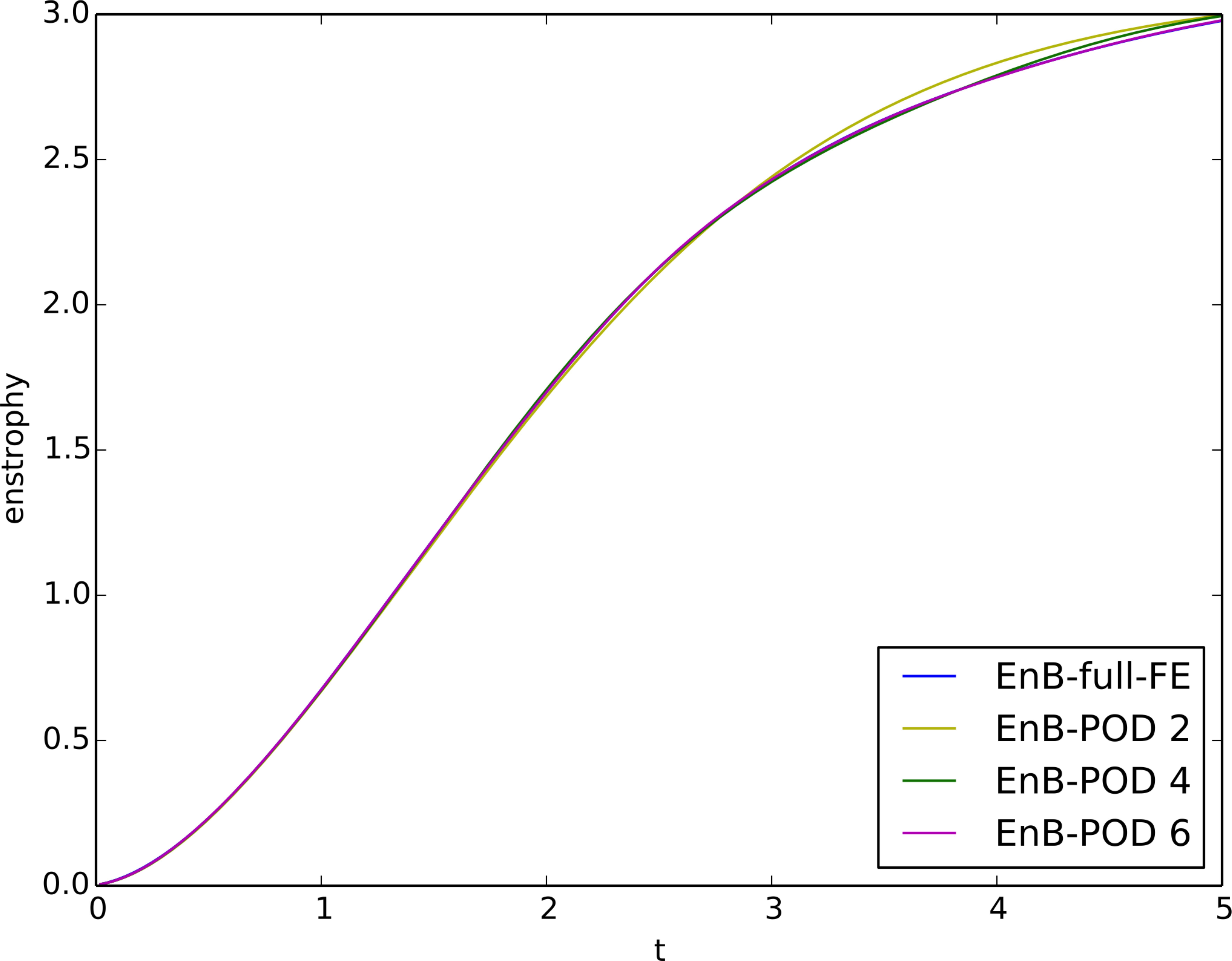}  
	\caption{For $0 \leq t \leq 5$, the energy (left) and enstrophy (right) of the ensemble determined for the EnB-full approximations and for the EnB-POD approximation of several dimensions for example 2.}
	\label{Energy_ex2/Enstrophy_ex2}
\end{figure}

\section{Conclusions}
{\color{red}In this work, a second-order ensemble-POD method for the non-stationary Navier-Stokes equations is proposed and analyzed. This is an extension of the method first proposed in \cite{GJS17} and should prove useful in settings that require long-time integrations.

The method proposed in this work as well as in \cite{GJS17} only work well with low Reynolds number flows. Whereas there is a vast array of methods for the Navier-Stokes equations for high Reynolds number flows, within the POD/ensemble framework the literature is much more limited. A few recent works within this setting are the regularized reduced-order models developed in \cite{WWXI17,XMRI17,XWWI17} and the regularized ensemble models in \cite{JL15}. A natural extension of this work will be to incorporate these regularized methods into the ensemble-POD framework.}

\section*{Acknowledgments}
The authors gratefully acknowledge the support provided by the US Air Force Office of Scientific Research grant FA9550-15-1-0001 and US Department of Energy Office of Science grants DE-SC0009324 and DE-SC0010678.

\bibliography{myrefs}
\bibliographystyle{plain}
\end{document}